\newtheorem{lemma}{Lemma}
\newtheorem{theorem}{Theorem}
\DeclareDocumentCommand\newstep{o}{%
\item\IfNoValueTF{#1}{}{#1 \textendash\xspace}}
\newlist{steps}{enumerate}{1}
\setlist[steps]{label=\textit{Step \arabic*:},leftmargin=*}
\title{Total value adjustment of Bermudan option valuation \\under pure jump L\'evy fluctuations}
\author[1]{Gangnan Yuan}
\author[1]{Ding Deng}
\author[2]{Jinqiao Duan}
\author[1]{Weiguo Lu}
\author[3,1,*]{Fengyan Wu}
\affil[1]{Department of Mathematics, University of Macau, Macao, China}
\affil[2]{Department of Applied Mathematics, Illinois Institute of Technology, Chicago, Illinois 60616, USA}
\affil[3]{College of Mathematics and Statistics, Chongqing University, Chongqing 401331, China}
\affil[*]{The corresponding author: fengyanwu@um.edu.mo}
\begin{abstract}
During the COVID-19 pandemic, many institutions have announced that their counterparties are struggling to fulfill contracts. Therefore, it is necessary to consider the counterparty default risk when pricing options. After the 2008 financial crisis, a variety of value adjustments have been emphasized in the financial industry. The total value adjustment (XVA) is the sum of multiple value adjustments, which is also investigated in many stochastic models such as Heston \cite{salvador2020total}
and Bates \cite{goudenege2020computing} models. In this work, a widely used pure jump L\'evy process, the CGMY process has been considered for pricing a Bermudan option with various value adjustments. Under a pure jump L\'evy process, the value of derivatives satisfies a fractional partial differential equation (FPDE). Therefore, we construct a method which combines Monte Carlo with finite difference of FPDE (MC-FF) to find the numerical approximation of exposure, and compare it with the benchmark Monte Carlo-COS (MC-COS) method. We use the discrete energy estimate method, which is different with the existing works, to derive the convergence of the numerical scheme. Based on the numerical results, the XVA is computed by the financial exposure of the derivative value.
\end{abstract}
\begin{document}
\flushbottom
\maketitle

\section*{Introduction}
Before 2007, investors believed that large financial institutions such as banks would not have the risk of default or bankruptcy, which was a very popular concept of "too big to fail". As we already know, Long Term Capital Management, Enron, Lehman Brothers and many other large financial institutions went bankrupt during the last finance crisis. In recent years, especially since the COVID-19 pandemic, the significance of the counterparty credit risk (CCR) has become increasingly prominent. In particular, institutions have gradually strengthened the risk management of financial derivatives traded over the counter (OTC). At the same time, industry regulations such as Basel regulations and the International Financial Reporting Standards (IFRS) also required the financial institutions to charge the counterparty a premium which is to balance the credit risk. This premium is usually called credit valuation adjustment (CVA) and Gregory \cite{gregory2012counterparty} gave a very detailed introduction of CCR and CVA. In recent years, many other valuation adjustments have also been discussed as a supplement of CVA.  The most significant ones are funding value adjustment (FVA) and capital value adjustment (KVA). Together with CVA, these three adjustments form the so-called XVA, i.e., \\ \centerline{\textit{XVA=CVA+FVA+KVA.}} Although there are some other adjustments, such as debit value adjustment (DVA) and market value adjustment (MVA), their existences and effects have some debates; see Kenyon\cite{kenyon2010completing} and Gregory\cite{gregory2015xva}.
The most commonly used adjustments in the industry are the three items we mentioned in the above equation. Among them, KVA is usually ignored and only considered in a few cases \cite{ruiz2015complete}, such as pricing a  10-year swap \cite{green2014kva}.

To date, there are two main approaches for the calculation of XVA. The first approach is constructed by Burgard\cite{burgard2011partial}, and they built a portfolio that contains all the underlying risk factors. Specifically, it includes defaultable bonds of each counterparty. Then they derive a partial differential equation (PDE) representation for the value of financial derivatives with CVA. By a similar method,  Arregui\cite{arregui2017pde} expanded it to the XVA and gives a lot of numerical examples, but it is still based on the classical Black-Scholes model\cite{black2019pricing}. Borovykh\cite{borovykh2018efficient} applied the method to a jump-diffusion model. As the model becomes more complex, they get a partial integro-differential equation (PIDE) representation instead of a PDE.  Salvador~\cite{salvador2020total} transferred the same method to a stochastic volatility model.

The above approach gives a PDE or PIDE to represent the value of a derivative with CVA or XVA. However, the assumptions of the above method are idealized. It requires us to find the corresponding defaultale bonds of each counterparty, which is actually hard to achieve in reality. In the industry, the more common approach to getting XVA is based on  Gregory~\cite{gregory2015xva}'s method, which is also the approach we use in this work. Since the exposure can be seen as a potential loss\cite{gregory2012counterparty}, we calculate the exposure and expected exposure (EE) related to the value of derivatives, then each component of XVA is obtained according to the definition (see Section II. A). In light of  Ruiz~\cite{ruiz2015complete}'s review article, the largest proportion of the total value adjustment is CVA, followed by FVA, and the influence of KVA can be ignored in most cases. Therefore, when we deal with a Bermudan option in this work, we will focus on CVA and FVA.\\
Based on the numerical estimation of EE, de Graaf~\cite{de2016efficient} investigated the CVA and its sensitivities under the Heston model. The Goudenege~\cite{goudenege2020computing} proposed a Hybrid Tree-Finite Difference method to compute the CVA under the Bates model. Similar to the steps of these two papers, we compute the value adjustments of a Bermudan option under a CGMY process. Comparing to the normal diffusion process, the pure jump process has better performance to capture all the empirical stylized regularities of stock price movements\cite{elliott2006option,geman2002pure}. As a tempered process, the CGMY model can be transformed into other pure jump processes, such as the Variance Gamma (VG) model\cite{oosterlee2019mathematical} and the KoBoL model\cite{cartea2007fractional}, after adjusting the parameters. This characteristic makes it a representative and widely used pure jump L\'evy model in finance. The difference between our work and  Goudenege\cite{goudenege2020computing} and de Graaf\cite{de2016efficient} is that we construct a complex pure jump model. In addition to CVA, we also consider the impact of FVA on the total value adjustments. \\
Since the exposure is based on the value of derivatives, we generate a sufficient number of paths of underlying price based on a pure jump L\'evy process by Monte Carlo simulation, and then combine them with two numerical methods to get an approximation of the option value. First, we combine Monte Carlo with finite difference of FPDE (MC-FF). Specifically, we employ the finite difference method for solving the FPDE related to the pure jump CGMY process. We use the second-order central difference operator to approximate the first-order space derivative, we utilise the tempered and weighted and shifted Gr\"{u}nwald difference (tempered-WSGD) operators to discrete the tempered fractional order derivatives \cite{lican2015}, and we employ the Crank-Nicolson scheme to discrete the time derivative. For convergence analysis, different with Li~\cite{lican2015}, the discrete energy estimate method is utilised to analyse the convergence of the numerical scheme. Second, we combine Monte Carlo with COS (MC-COS), which is a benchmark method we use to compare with the MC-FF method. This method was proposed by Fang\cite{fang2009novel}, which has a high accuracy but a long calculation time. Then, the CVA and FVA are calculated and compared by the two methods.\\
The rest of the paper is structured as follows. Section II gives the mathematical model including the reviews of XVA components and the valuation parts. In Section III, we propose two numerical methods (MC-FF, MC-COS) to compute the expected exposure of the Bermudan option under the CGMY process. And the numerical results on exposures and XVA are presented in Section IV. Finally, the conclusion and further applications of this work are drawn in Section V.

\section*{Mathematical Models For XVA and Exposure}

As we mentioned before, XVA consists of different value adjustments. In this section, we introduce the components of XVA and the exposures of the Bermudan option under a pure jump L\'evy process.
\subsection*{Components of XVA}
Traditional derivatives valuation, especially for option pricing, only considered the impact of cash flow. For simple derivative types, the pricing problem was usually just a matter of applying the correct discount factor. The global financial crisis led to a series of valuation adjustments, by considering credit risk, funding costs and regulation capital costs, to convert simple valuation into correct one. A general and simple representation \cite{gregory2015xva} of value adjustments is:
\\ \centerline{Actual Value= Base Value + XVA,}
where XVA is composed of the various value adjustments we mentioned in last section and base value refers to the original derivative value. Notice that this expression assumes the XVA components is totally separate from the actual value. It is not completely true in reality but it can almost always be considered to be a reasonable assumption in practice. \\
Before giving the definition of each component of the XVA, we first introduce several factors that can affect value adjustments:
\begin{itemize}
\item \textit{Loss given default (LGD).} The LGD refers to the proportion that would be lost if a counterparty default. It is sometimes defined as one minus the recovery rate. Although there are some works discussing the recovery rate such as Unal \cite{unal2003pricing} and Schlafer\cite{schlafer2014recovery}, the LGD is often assumed to be a constant \cite{gregory2012counterparty}. Throughout this paper, we also assume the LGD to be a constant.
\item \textit{Expected exposure (EE).} The fact of credit exposure is a  positive value of a financial derivative, and the expected exposure can be seen as the future value of the derivative. Following Gregory \cite{gregory2012counterparty}, let $V(t)$ be the value of a portfolio at time $t$, then the exposure $E(t)$ is defined by
\begin{equation} \label{Et}
    E(t)=V(t)^+,
\end{equation}
where $x^+=\textrm{max}[x,0],$
and the present expected exposure at a future time $t$ is defined by
\begin{equation}
    EE(t)=\mathbb{E}[E(t)|\mathfrak{F}_{0}],
\end{equation}
where $\mathfrak{F_{0}}$ is the filtration at time $t=0$. And we also use potential future exposure (PFE) to represent the best or worst case the buyer may face in the future. It is defined as
\begin{equation}
    PFE_{\alpha}(t)=inf\left\{ x:\mathbb{P}(E(t)\leq x)\geq\alpha\right\},
\end{equation}
where $\alpha$ takes 97.5\% and 2.5\% in this work.
\item \textit{Default probability (PD).} As its name implies, the PD is the probability of counterparty defaults, which is usually derived from credit spreads observed in the market \cite{gregory2015xva}. Let us define the default probability between two sequential times $t_{m}$ and $t_{m+1}$ is $PD(t_{m},t_{m+1})$, a commonly used approximation \cite{gregory2015xva,hull2019options} of PD is:
\begin{eqnarray}
    PD(t_{m},t_{m+1})\approx \textrm{exp}\left ( -\frac{s(t_{m})\cdot t_{m}}{LGD} \right )-\textrm{exp}\left ( -\frac{s(t_{m+1})\cdot t_{m+1}}{LGD} \right ),
\end{eqnarray}
where $s(t)$ is the credit spread at time $t$.
\end{itemize}
Recall that the XVA mainly consists of three parts: CVA, FVA and KVA.\\
\textbf{CVA} The credit value adjustment is the most important part of XVA and it is also the key expression for describing the counterparty risk. Different with traditional credit limits, the CVA can be seen as the actual price of counterparty credit risk. According to Gregory \cite{gregory2012counterparty}, assuming independence  between PD, exposure and recovery rate, a practical CVA expression is given by
\begin{equation}
\begin{aligned}
    CVA= -LGD\int_{0}^{T}EE^{*}(t)dPD(t)\approx -(1-R)\sum_{m=1}^{M}EE^{*}(t_{m})PD(t_{m-1},t_{m}),
    \end{aligned}
\end{equation}
where $R$ is the recovery rate, $EE^{*}$ is the discounted expected exposure and $\left \{  0=t_{1}<t_{2}<...<t_{M}=T \right \}  $ is a fixed time grid.\\
\\
\textbf{FVA} Limited to liquidity and capacity, a large proportion of OTC derivatives are traded without collateral. These uncollateralised trades are source of funding risk and the FVA can be broadly considered as a funding cost for these trades \cite{gregory2015xva,del2016efficient}. An intuitive FVA formula is:
\begin{equation}
\begin{aligned}
  FVA&=-\int_{0}^{T} (EPE^{*}(t)-ENE^{*}(t))\cdot s^{f}(t)dt
  \approx -\sum_{m   =1}^{M} \left (EPE^{*}(t_{m})-ENE^{*}(t_{m})\right)
\times\left \{ \textrm{exp}\left [ -s^{f}(t_{m-1})\cdot t_{m-1} \right ]- \textrm{exp}\left [ -s^{f}(t_{m})\cdot t_{m} \right ]\right \},
\end{aligned}
\end{equation}
where $EPE^{*}$ and $ENE^{*}$ are discounted expected positive exposure and discounted expected negative exposure, respectively, and $s^{f}(t)$ is the market funding spread. For a future time $t$, the $EPE$ and $ENE$ are given by:
\begin{equation}
    \begin{aligned}
      &EPE(t):=\mathbb{E}[E^{+}(t)|\mathfrak{F}_{0}],\\
      &ENE(t):=\mathbb{E}[E^{-}(t)|\mathfrak{F}_{0}],
    \end{aligned}
\end{equation}
where $x^{-}=\textrm{min}[x,0]$. In this work, we price a Bermudan option whose option value can never be negative. Therefore, the $ENE:=0$ and the FVA can be simplified to:
\begin{equation}
    FVA=-\int_{0}^{T} EE^{*}(t)\cdot s^{f}(t)dt.
\end{equation}
\\
\textbf{KVA} In general, the capital value adjustment represents a cost for a financial institute to meet the regulatory needs, and it measures the tail risk it faces \cite{ruiz2015complete}. Regulators set capital use restrictions or require banks to reach a certain capital threshold, at least implicitly charging capital for transactions. The formula of KVA is given by Gregory \cite{gregory2015xva}:
\begin{equation}
    KVA=-\int_{0}^{T}EK_{t}\cdot r_{c}\cdot DF_{t}dt,
\end{equation}
where $EK_{t}$ is the expected capital, $DF_{t}$ is the survival rate and  $r_{c}$ is the cost of holding the capital. As we mentioned in last section, for option pricing, the influence of the KVA can be neglected, thus we will not consider this term when we calculate XVA later.
\subsection*{Exposure of Bermudan option under L\'evy process}
From last section, we can see that both CVA and FVA calculation need to find exposure. And from (\ref{Et}), we know that exposure is decided by the option value. A Bermudan option is a special American-style  option, it can be early exercised at a restricted set of possible exercise dates. Let $\mathfrak{T}$ denote the set of exercises time:
\begin{equation}
    \mathfrak{T}=\left \{t_{1},t_{2},...,t_{M} \right \},
\end{equation}
where $M$ is the number of exercise times, and the time interval $\Delta t$ between each exercise time is equal. Because the dynamics of stock price is under burst, intermittent, disrupting fluctuations, the stock price can be considered as a process following the L\'evy process \cite{cartea2007fractional}.  Let $S_{t}$ be the price of underlying assets, which satisfies the following stochastic differential equation
\begin{equation}
    d(\textit{ln}S_{t})=(r-\nu)dt+dL_{t},
\end{equation}
with solution
\begin{equation}
   S_{t}=S_0e^{(r-\nu)t+\int_0^t dL_u},
\end{equation}
where $r$ is risk-free rate, $\nu$ is a convexity adjustment, $L_{t}$ is a L\'evy process, and $S_0$ is the initial price. In this work, let us consider a CGMY process $X_{t}$ defined by Carr \cite{carr2002fine}, it is a pure jump L\'evy process with L\'evy measure $W(dx)=\boldsymbol{w}_{CGMY}(x)$,
\begin{equation}
    \boldsymbol{w}_{CGMY}(x)=C\frac{e^{-G|x|}}{|x|^{1+Y}}\mathds{1}_{x<0}+C\frac{e^{-Mx}}{x^{1+Y}}\mathds{1}_{x>0},
\end{equation}
whose characteristic exponent can be obtained through L\'evy-Khintchine representation \cite{duanbook,cartea2007fractional}
\begin{equation}
    \Psi_{t}(z)=C\Gamma(Y)\left [ (M-iz)^{Y}-M^{Y}+(G+iz)^{Y}-G^{Y} \right ],
\end{equation}
where $z\in\mathbb{R}$, $\Gamma(x)$ is a Gamma function, $C>0$, $G\geq 0 $, $M\geq0$ and $Y<2$. The parameter $C$ measures the intensity of jumps, $G$ and $M$ control the skewness of distribution. For $Y\in[0,1]$, it means infinite activity process of finite variation, whereas for $Y\in(1,2)$, the process is infinity activity and infinity variation \cite{oosterlee2019mathematical}.\\For a CGMY process, the convexity adjustment is given by\cite{cartea2007fractional}:
\begin{equation}
    \nu=C\Gamma(Y)\left [ (M-1)^{Y}-M^{Y}+(G+1)^{Y}-G^{Y} \right ].
\end{equation}
At each exercise time, the payoff function $\varphi$ and continuous value $V^{c}$ are compared, and they are defined as:
\begin{equation}\label{payoff}
\varphi(S_{t_{m}})=\begin{cases}
(S_{t_{m}}-K)^{+}, & \textrm{for a call option,}\\
(K-S_{t_{m}})^{+}, & \textrm{for a put option,}
\end{cases}
\end{equation}
\begin{equation}\label{Cvalue}
    V^{c}(S_{t_{m}},t_{m})=e^{-r\Delta t}\mathbb{E}\left[ V(S_{t_{m+1}},t_{m+1})\mid S_{t_{m}}\right],
\end{equation}
where $K$ is strike price, $V(S_{t_{m+1}},t_{m+1})$ is the option value at time $t_{m+1}$. A natural assumption is that holder of the Bermudan option will exercise the option when the payoff value is higher than the continuous value at each $t_{m}$. Therefore, the value of Bermudan option satisfies the following term \cite{del2016efficient}:
\begin{equation}
     V(S_{t_{m}},t_{m})=\begin{cases}
\varphi(S_{t_{M}}) & \text{for}~ m=M,\\
\textrm{max}[V^{c}(S_{t_{m}},t_{m}),\varphi(S_{t_{m}})] & \text{for}~ m=1,2,\cdots,M-1,\\
V^{c}(S_{t_{0}},t_{0}) &\text{for}~ m=0.
\end{cases}
\end{equation}
According to \eqref{Et}, it is not difficult to find that the exposure equals to zero if the option is exercised, and the continuous value will be the exposure if no exercise. The exposure of Bermudan option at time $t_{m}$ is formulated as:
\begin{equation}
    E(t_{m})=V^{c}(S_{t_{m}},t_{m})\mathds{1}_{no-exercise},
\end{equation}
where $m\in[1,2,..,M-1]$. In addition, we have $E(t_{0})=V^{c}(S_{t_{0}},t_{0})$ and $E(t_{M})=0$.

\section*{Numerical Methods}

 In this section, we introduce two methods to compute the EE of the Bermudan option under the CGMY process. Both of them are combined with Monte Carlo simulation to obtain the option value and then the value adjustments can be found by the definition in Section II. A. For the Bermudan option, the option value and exposure depend on the underlying price $S_{t_{m}}$ at the exercise time point $t_{m}$. If the price paths are generated, the distribution of the future value can be computed. The Monte Carlo simulation of the CGMY process is quite complex. Based on Madan\cite{madan2006cgmy} and Sioutis \cite{sioutis2017calibration}, we can consider the CGMY process as a time-changed Brownian motion, i.e., the CGMY process can be written as
\begin{equation}
    X(t)=\frac{G-M}{2} \Upsilon(t)+B(\Upsilon(t)),
\end{equation}
where $\Upsilon(t)$ is a subordinator independent of the Brownian motion $B(t)$. By applying Rosinski \cite{rosinski2001series}  truncation method, the CGMY random variable can be simulated. More details can be found in Madan's work \cite{madan2006cgmy}.\\~\\
The general steps of whole algorithm are presented as follows:
\begin{steps}
    \newstep Simulate paths of underlying price $S_{t}$ under the CGMY model by Monte Carlo method. 
    \newstep \label{s2} Calculate continuous values and exercise values at each exercise time $t_{m}$ and terminal date, decide weather to exercise it or not.
    \newstep Find the exposure of each path if the option is not exercised, otherwise the exposure equals 0.
    \newstep Compute the CVA and FVA as defined in Section II. A.
\end{steps}
The rest parts of this section will explain the two numerical methods we will use in \emph{Step 2}.

\subsection*{Monte Carlo and finite difference of FPDE }

Recently, the fractional models have aroused numerous research interests in various fields, ranging from finance \cite{chen2021implicit,she2021novel}, neuroscience \cite{cai2019effects,cai2020state}, physics \cite{herrmann2011fractional,kirkpatrick2016fractional}, and so on \cite{yibook,laskin2000fractional,zaslavsky2007dynamics,li2020long,zhang2021pointwise,zhang2020runge}.  We here focus on the fractional model in option pricing. The finite difference method is a widely used method in option pricing. Combining with the results of Monte Carlo simulation, we can calculate the option values at different time points. The Monte Carlo simulation was introduced at the beginning of this section. In this part, we will establish the fully discrete numerical scheme for solving the FPDE related to the CGMY process. By using discrete energy estimate method, we will prove the convergence of the numerical scheme.\\
 Consider a European-style option under the CGMY process as defined in Section II. B,  Cartea \cite{cartea2007fractional} proves that option value $V$ satisfies the following FPDE:
\begin{equation}
     \begin{split}
     &\left[r+C\Gamma(-Y)(G^{Y}+M^{Y})\right]V(x,t)\\
     &=\frac{\partial V(x,t)}{\partial t}+(r-\nu)\frac{\partial V(x,t)}{\partial x}+C\Gamma (-Y)e^{-Gx} {_{-\infty}D_{x}^{Y}}\left( e^{Gx}V(x,t)\right)+C\Gamma (-Y)e^{Mx} {_{x}D_{\infty}^{Y}}\left( e^{-Mx}V(x,t)\right),
     \end{split}
\end{equation}
where $_{-\infty}D_{x}^{Y}$ and $_{x}D_{\infty}^{Y}$ are left and right Riemann-Liouville (RL) fractional derivatives, they are given by
\begin{equation}
    _{-\infty}D_{x}^{Y}f(x)=\frac{1}{\Gamma(p-Y)}\frac{\partial^{p}}{\partial x^{p}}\int_{-\infty}^{x}(x-y)^{p-Y-1}f(y)dy,
\end{equation}
\begin{equation}
    _{x}D_{\infty}^{Y}f(x)=\frac{(-1)^p}{\Gamma(p-Y)}\frac{\partial^{p}}{\partial x^{p}}\int_{-\infty}^{x}(y-x)^{p-Y-1}f(y)dy
\end{equation}
for $p-1\leq Y<p$, and $p$ is the smallest integer than $Y$. The left and right RL tempered fractional derivatives are defined as \cite{Sabzikar2015}
 $$ _{-\infty}D_{x}^{Y,G}f:=e^{-Gx}{ _{-\infty}D_{x}^{Y}}\left( e^{Gx}f\right),~~{_{x}D_{\infty}^{Y,M}}f:= e^{Mx} {_{x}D_{\infty}^{Y}}\left( e^{-Mx}f\right).$$
Furthermore, for an American-style option, it becomes a free-boundary problem \cite{guo2016valuation}. Considering the optimal-exercise boundary and payoff function (see equation (28) and (29) in Guo's work\cite{guo2016valuation}), the American option value under the CGMY model satisfies:
\begin{equation}\label{americanop}
     \begin{split}
     &\left[r+C\Gamma(-Y)(G^{Y}+M^{Y})\right]V(x,t)\\
     &\geq\frac{\partial V(x,t)}{\partial t}+(r-\nu)\frac{\partial V(x,t)}{\partial x}+C\Gamma (-Y)e^{-Gx} {_{-\infty}D_{x}^{Y}}\left( e^{Gx}V(x,t)\right)+C\Gamma (-Y)e^{Mx} {_{x}D_{\infty}^{Y}}\left( e^{-Mx}V(x,t)\right),
     \end{split}
\end{equation}
for any time $t \in [t_{0},T]$ the option can be exercised. A Bermudan option can be seen as a discrete case of American option, i.e., for each exercise time, we can solve \eqref{americanop} as an equality, and the Bermudan option value will take the maximum value between this value and the exercise value.\\
\subsubsection*{Finite difference scheme for FPDE related with CGMY process}
In the following approximation method, the unbounded spatial domain is truncated into a bounded one, $x\in(x_L, x_R).$ Now, we present the finite difference scheme for solving the FPDE related with the CGMY process for a European call option,
\begin{equation}\label{model-1}
\begin{split}
\begin{cases}
&rV(x,t)=\frac{\partial V(x,t)}{\partial t}+(r-\nu)\frac{\partial V(x,t)}{\partial x}+C\Gamma (-Y)[ _{x_L}D_{x}^{Y,G}V(x,t)  -G^{Y}V(x,t)]\\
&~~~~~~~~~~~~~~~~~~~+C\Gamma (-Y)[ {_{x}D_{x_R}^{Y,M}}V(x,t)-M^{Y}V(x,t)], (x,t)\in(x_L,x_R)\times(0,T),\\
&V(x_L,t)=0,~V(x_R,t)=e^{x_R}-Ke^{-r(T-t)},~t\in(0,T),\\
&V(x,T)=(e^x-K)^{+},~~ x\in(x_L, x_R),
\end{cases}
\end{split}
\end{equation}
where $ _{x_L}D_{x}^{Y,G}V(x,t)=e^{-Gx}{ _{x_L}D_{x}^{Y}}\left( e^{Gx}V(x,t)\right),~{_{x}D_{x_R}^{Y,M}}V(x,t)= e^{Mx} {_{x}D_{x_R}^{Y}}\left( e^{-Mx}V(x,t)\right).$
\\
We divide the temporal domain into $N$ parts by the grid points $t_j=T-j\tau ~(0 \leq j \leq N_t),$ where the temporal stepsize $\tau=\frac{T}{N_t}.$ The spatial domain is divided into $N_x$ parts by the mesh points $x_n=x_L+nh~(0 \leq n \leq N_x),$ where the spatial stepsize $h=\frac{x_R-x_L}{N_x}.$  The temporal domain is covered by $\Omega_{\tau}= \{t_j| 0\leq j \leq N_t \}$, and the spatial domain is covered by $\Omega_h= \{x_n| 0\leq n \leq N_x \}$. Let $\mathcal {V}_h=\{v|v=\{v_{n}^j|~0 \leq j \leq N_t,~ 0 \leq n \leq N_x\} \}$ be grid function space defined on
$\Omega_\tau\times \Omega_h$. For the grid function $v \in \mathcal {V}_h,$ we have the following notations:
$$ v_{n}^{j+\frac{1}{2}}=\frac{v_{n}^{j+1}+v_{n}^j}{2},~~   \delta_t v_{n}^{j+\frac{1}{2}}=\frac{v_{n}^{j+1}-v_{n}^j}{-\tau},~~
\delta_{x0} v_{n}^{j+\frac{1}{2}}= \frac{v_{n+1}^{j+\frac{1}{2}}-v_{n-1}^{j+\frac{1}{2}}}{2 h}.$$
First, we approximate the first-order space derivative by using the second-order central difference operator, and for the tempered fractional order
derivatives, we turn to the following useful lemma. For conciseness, we let the parameter $\lambda$ refer to $G$ and $M,$ respectively.
 \begin{lemma}\cite{lican2015}\label{two-ord}
  The Y-th order left and right RL tempered fractional derivatives of $V(x)$ at point $x_n$ can be approximated by the tempered-WSGD operators:
 \begin{eqnarray}\label{WSGD-2ord}
    && _{x_L}D_{x}^{Y,\lambda}V(x_n)  -\lambda^{Y}V(x_n):= _{L}\mathfrak{D}_{h}^{Y,\lambda}V(x_n)+\mathcal {O}(h^{2})=\frac{1}{h^{Y}}\left[\sum\limits_{l=0}^{n+1}g_{l,\lambda}^{(Y)}V(x_{n-l+1})-\phi(\lambda)V\left(x_{n}\right)\right]+\mathcal {O}(h^{2}),\\
    && _{x}D_{x_R}^{Y,\lambda}V(x_n)-\lambda^{Y}V(x_n):=_{R}\mathfrak{D}_{h}^{Y,\lambda}V(x_n)+\mathcal {O}(h^{2})=\frac{1}{h^{Y}}\left[\sum\limits_{l=0}^{N_x-n+1}g_{l,\lambda}^{(Y)}V(x_{n+l-1})-\phi(\lambda)V\left(x_{n}\right)\right]+\mathcal {O}(h^{2}),
  \end{eqnarray}
  here
  \begin{equation}
       \phi(\lambda)=\left(\gamma_{1} e^{h \lambda}+\gamma_{2}+\gamma_{3} e^{-h \lambda}\right)\left(1-e^{-h \lambda}\right)^{Y}
  \end{equation}and the weights are given by
\begin{equation}\label{weight-g}
\begin{split}
\begin{cases}
&g_{0, \lambda}^{(Y)}=\gamma_{1} \omega_{0} e^{h \lambda}, g_{1, \lambda}^{(Y)}=\gamma_{1} \omega_{1}+\gamma_{2} \omega_{0}, \\
&g_{l, \lambda}^{(Y)}=\left(\gamma_{1} \omega_{l}+\gamma_{2} \omega_{l-1}+\gamma_{3} \omega_{l-2}\right) e^{-(l-1) h \lambda}, ~l \geq 2,\\
&\omega_{0}=1, \omega_{l}=\left(1-\frac{1+Y}{l}\right) \omega_{l-1},~ l \geq 1,
\end{cases}
\end{split}
\end{equation}
and the parameters $\gamma_1,~\gamma_2$ and $\gamma_3$ admit the following linear system
 \begin{equation}\label{gamma3}
\begin{split}
\begin{cases}
&\gamma_1=\frac{Y}{2}+\gamma_3, \\
&\gamma_2=\frac{2-Y}{2}-2\gamma_3,
\end{cases}
\end{split}
\end{equation}
here $\gamma_3$ is the free variable.
\end{lemma}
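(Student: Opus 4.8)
The plan is to reduce the tempered operators to the classical Riemann--Liouville case by an exponential change of variable, expand the resulting shifted Gr\"{u}nwald operators through their Fourier symbols, and then fix the weights $\gamma_1,\gamma_2,\gamma_3$ so that the first-order error terms cancel. I would treat the left operator in detail, the right one following by the reflection $x\mapsto -x$. Writing $g(x)=e^{\lambda x}V(x)$, the definition gives $_{x_L}D_{x}^{Y,\lambda}V=e^{-\lambda x}\,{}_{x_L}D_{x}^{Y}g$, so any discretization of ${}_{x_L}D_{x}^{Y}g$ yields one for the tempered derivative after multiplying back by $e^{-\lambda x}$. Extending $V$ (hence $g$) by zero for $x\le x_L$, I would introduce, for an integer shift $p$, the shifted tempered Gr\"{u}nwald operator
\begin{equation*}
A_{h,p}^{Y,\lambda}V(x)=\frac{1}{h^{Y}}\sum_{l=0}^{\infty}\omega_{l}\,e^{-(l-p)h\lambda}\,V\big(x-(l-p)h\big),
\end{equation*}
where the coefficients $\omega_{l}=(-1)^{l}\binom{Y}{l}$ are generated by $(1-z)^{Y}=\sum_{l}\omega_{l}z^{l}$ and obey the recursion in \eqref{weight-g}; the truncation to finitely many terms is exact at $x_{n}$ since $V$ vanishes to the left of $x_L$.

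The key computation is the symbol expansion. Testing on $V(x)=e^{i\xi x}$ collapses the sum via the generating function and gives the symbol $\widehat{A_{h,p}^{Y,\lambda}}(\xi)=h^{-Y}e^{p\mu}(1-e^{-\mu})^{Y}$ with $\mu=h(\lambda+i\xi)$, while the exact operator $_{x_L}D_{x}^{Y,\lambda}$ has symbol $(\lambda+i\xi)^{Y}=h^{-Y}\mu^{Y}$. A Taylor expansion around $\mu=0$ produces
\begin{equation*}
e^{p\mu}(1-e^{-\mu})^{Y}=\mu^{Y}\Big[1+\big(p-\tfrac{Y}{2}\big)\mu+\mathcal{O}(\mu^{2})\Big],
\end{equation*}
so each single shift is only first-order accurate, with a leading error coefficient that is affine in $p$. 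I would then form the combination $\gamma_1 A_{h,1}^{Y,\lambda}+\gamma_2 A_{h,0}^{Y,\lambda}+\gamma_3 A_{h,-1}^{Y,\lambda}$; demanding consistency $\gamma_1+\gamma_2+\gamma_3=1$ together with vanishing of the $\mathcal{O}(h)$ term $\sum_{j}\gamma_{j}(p_{j}-Y/2)=0$ yields exactly the linear system \eqref{gamma3} with $\gamma_3$ free, and raises the order to $\mathcal{O}(h^{2})$.

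It then remains to collect terms. Reindexing the three shifted sums so that every contribution multiplies $V(x_{n-l+1})$ reproduces the weights $g_{l,\lambda}^{(Y)}=(\gamma_1\omega_{l}+\gamma_2\omega_{l-1}+\gamma_3\omega_{l-2})e^{-(l-1)h\lambda}$ of \eqref{weight-g}, together with the stated boundary cases $g_{0,\lambda}^{(Y)}$ and $g_{1,\lambda}^{(Y)}$ that arise because $\omega_{-1}=\omega_{-2}=0$. The constant $\phi(\lambda)$ is identified as the value of the combined symbol at $\xi=0$, namely $(\gamma_1 e^{h\lambda}+\gamma_2+\gamma_3 e^{-h\lambda})(1-e^{-h\lambda})^{Y}$, whose leading behaviour $h^{-Y}\phi(\lambda)\to\lambda^{Y}$ shows that subtracting $h^{-Y}\phi(\lambda)V(x_{n})$ is precisely the second-order discrete realization of the tempering term $-\lambda^{Y}V$. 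Assembling the pieces gives both $_{L}\mathfrak{D}_{h}^{Y,\lambda}$ and, by symmetry, $_{R}\mathfrak{D}_{h}^{Y,\lambda}$.

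The main obstacle I expect is promoting the formal symbol expansion into a rigorous uniform $\mathcal{O}(h^{2})$ estimate: this requires sufficient regularity and decay of $V$ so that $\widehat{V}$ exists and the remainder $\int|\xi|^{Y+2}|\widehat{V}(\xi)|\,d\xi$ is finite, and it requires checking that the zero-extension at $x_L$ and the finite truncation do not degrade the order. This is the standard but delicate part of the weighted-and-shifted Gr\"{u}nwald analysis, established in \cite{lican2015}.
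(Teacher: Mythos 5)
The paper offers no proof of this lemma---it is imported verbatim from \cite{lican2015}---so the only meaningful comparison is with that reference, and your argument follows the same route: conjugation by $e^{\lambda x}$ to reduce to the untempered case, the Fourier symbol $h^{-Y}e^{p\mu}(1-e^{-\mu})^{Y}$ of the shifted tempered Gr\"{u}nwald operator with $\mu=h(\lambda+i\xi)$, cancellation of the $\mathcal{O}(h)$ term via $\gamma_1+\gamma_2+\gamma_3=1$ and $\sum_{j}\gamma_{j}(p_{j}-\tfrac{Y}{2})=0$ (which is exactly \eqref{gamma3}), and reindexing the three shifted sums to obtain \eqref{weight-g} and $\phi(\lambda)$. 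The outline is correct, and the one step you defer---promoting the symbol expansion to a uniform $\mathcal{O}(h^{2})$ bound under suitable regularity and the zero extension at $x_L$---is precisely the part carried out rigorously in \cite{lican2015}, so nothing is missing in substance.
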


Second, we employ the  Crank-Nicolson scheme to discrete the time derivative. Then, we arrive at
\begin{eqnarray}\label{exact}
&&\delta_t V_{n}^{j+\frac{1}{2}}+(r-\nu)\delta_{x0} V_{n}^{j+\frac{1}{2}}+C\Gamma (-Y)[_{L} \mathfrak{D}_{h}^{Y, G} V_{n}^{j+\frac{1}{2}}+  _{R} \mathfrak{D}_{h}^{Y, M} V_{n}^{j+\frac{1}{2}}] \nonumber\\
&&-r V_{n}^{j+\frac{1}{2}}=R_n^{j+\frac{1}{2}},~~~~0 \leq j \leq N_t-1,~~~1\leq n \leq N_x-1,
\end{eqnarray}
and there exists a constant $C_R$ such that the truncation error
\begin{equation}\label{trunc1}
|R_n^{j+\frac{1}{2}}|\leq C_R(\tau^2+h^2).
\end{equation}
Let $v_{n}^{j+\frac{1}{2}}$ denote the approximate solution to ${V}_{n}^{j+\frac{1}{2}}$ and drop the truncation error. Finally, we obtain the fully discrete finite difference scheme:
\begin{eqnarray}\label{numeric}
\delta_t v_{n}^{j+\frac{1}{2}}+(r-\nu)\delta_{x0} v_{n}^{j+\frac{1}{2}}+C\Gamma (-Y)[_{L} \mathfrak{D}_{h}^{Y, G} v_{n}^{j+\frac{1}{2}}+  _{R} \mathfrak{D}_{h}^{Y, M} v_{n}^{j+\frac{1}{2}}]-r v_{n}^{j+\frac{1}{2}} =0,~~~~0 \leq j \leq N_t-1,~~~1\leq n \leq N_x-1.
\end{eqnarray}
The initial-boundary conditions for a call option are discretized as
$$v_{n}^{0}=(e^{x_n}-K)^{+},~~ n=1,2, \ldots, N_x-1, $$
$$v_{0}^{j}=0,~~v_{N_x}^{j}=e^{x_R}-Ke^{-r(T-t_j)},~~ j=0,1, \ldots, N_t.$$
We next move on to the analysis of convergence for the scheme \eqref{numeric}.
For any grid functions $v, u \in \mathcal{V}_h$, we introduce the discrete inner product
$$\langle v,u\rangle=h\sum\limits_{n=1}^{N_x-1}v_{n} u_{n},$$
and corresponding induced norm
$$||v||=\sqrt{\langle v,v\rangle}.$$
\subsubsection*{Convergence analysis of finite difference scheme}
We propose two lemmas below, which are essential to the analysis of convergence.
\begin{lemma}\label{keylemma-phi}
For $Y\in(1,2)$ and $\lambda\geq0,$ if $\gamma_3\geq-\frac{Y}{2}$, we have
\begin{eqnarray}
\phi(\lambda)\geq0,
\end{eqnarray}
where $\phi(\lambda)$ is defined in Lemma 1.
\end{lemma}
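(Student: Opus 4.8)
The plan is to factor $\phi(\lambda)$ into the product of the weight factor $\bigl(\gamma_1 e^{h\lambda}+\gamma_2+\gamma_3 e^{-h\lambda}\bigr)$ and the power $\bigl(1-e^{-h\lambda}\bigr)^{Y}$, and to argue that each factor is nonnegative in turn. The second factor is immediate: since the spatial stepsize $h>0$ and $\lambda\geq0$, we have $e^{-h\lambda}\leq1$, hence $1-e^{-h\lambda}\geq0$; raising a nonnegative base to the positive exponent $Y\in(1,2)$ preserves nonnegativity, so $\bigl(1-e^{-h\lambda}\bigr)^{Y}\geq0$ (and it vanishes only at $\lambda=0$). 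The whole difficulty is therefore concentrated in showing the first factor is nonnegative.

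For the first factor I would set $s:=e^{h\lambda}\geq1$ and eliminate $\gamma_1,\gamma_2$ through the linear system \eqref{gamma3}, writing $\gamma_1=\tfrac{Y}{2}+\gamma_3$ and $\gamma_2=\tfrac{2-Y}{2}-2\gamma_3$. Substituting and collecting the terms proportional to $\gamma_3$ gives
\begin{equation*}
\gamma_1 s+\gamma_2+\frac{\gamma_3}{s}=\gamma_3\Bigl(s-2+\frac{1}{s}\Bigr)+\frac{Y}{2}(s-1)+1=\gamma_3\,\frac{(s-1)^2}{s}+\frac{Y}{2}(s-1)+1.
\end{equation*}
The key observation is that $s-2+\tfrac{1}{s}=\tfrac{(s-1)^2}{s}\geq0$ for $s>0$, so the coefficient of $\gamma_3$ is nonnegative; together with $s\geq1$ this already disposes of the case $\gamma_3\geq0$, where every term on the right is nonnegative and the sum is at least $1$.

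The remaining and most delicate case is $-\tfrac{Y}{2}\leq\gamma_3<0$, where the term $\gamma_3\tfrac{(s-1)^2}{s}$ is negative. Here I would bound it below using the extreme admissible value $\gamma_3=-\tfrac{Y}{2}$, obtaining
\begin{equation*}
\gamma_1 s+\gamma_2+\frac{\gamma_3}{s}\geq-\frac{Y}{2}\,\frac{(s-1)^2}{s}+\frac{Y}{2}(s-1)+1.
\end{equation*}
Writing $u:=s-1\geq0$ and simplifying the bracket via the identity $u-\tfrac{u^2}{u+1}=\tfrac{u}{u+1}$ collapses the right-hand side to $\tfrac{Y}{2}\cdot\tfrac{u}{u+1}+1\geq1>0$. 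Hence the first factor is strictly positive for every admissible $\gamma_3$, and multiplying by the nonnegative second factor yields $\phi(\lambda)\geq0$, which is the claim. I expect the algebraic recognition of the perfect-square form $\tfrac{(s-1)^2}{s}$ and the cancellation $u-\tfrac{u^2}{u+1}=\tfrac{u}{u+1}$ to be the only subtle points; everything else reduces to routine substitution, and notably the argument uses only $Y>0$, so the hypothesis $Y\in(1,2)$ is not essential to the inequality itself.
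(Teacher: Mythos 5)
Your proposal is correct, and it follows the paper's own skeleton up to the decisive step: both factor $\phi(\lambda)$ into $\phi_1(\lambda)\,(1-e^{-h\lambda})^{Y}$, dispose of the second factor immediately, substitute the relations $\gamma_1=\tfrac{Y}{2}+\gamma_3$, $\gamma_2=\tfrac{2-Y}{2}-2\gamma_3$, and collect the $\gamma_3$ terms to reach $\phi_1=\gamma_3\bigl(e^{h\lambda}+e^{-h\lambda}-2\bigr)+\tfrac{Y}{2}\bigl(e^{h\lambda}-1\bigr)+1$, which is exactly your $\gamma_3\tfrac{(s-1)^2}{s}+\tfrac{Y}{2}(s-1)+1$. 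Where you diverge is the finish. The paper isolates $\gamma_3$, defines $f(h\lambda)=\bigl(\tfrac{Y}{2}(1-e^{h\lambda})-1\bigr)/\bigl(e^{h\lambda}+e^{-h\lambda}-2\bigr)$, asserts (with only a ``one can check'') that $f$ is monotone increasing, and identifies its supremum as the limit $-\tfrac{Y}{2}$ at infinity. You instead note that the coefficient of $\gamma_3$ is nonnegative, so it suffices to check the extremal value $\gamma_3=-\tfrac{Y}{2}$, where the identity $u-\tfrac{u^2}{u+1}=\tfrac{u}{u+1}$ collapses the expression to $\tfrac{Y}{2}\cdot\tfrac{u}{u+1}+1\geq1$. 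The two routes are logically equivalent (yours amounts to verifying $\sup f\leq-\tfrac{Y}{2}$ directly), but yours is the more complete argument: it replaces the unproven monotonicity claim with a two-line algebraic verification, shows the first factor is in fact bounded below by $1$, and makes explicit that only $Y>0$ is needed for this lemma, not $Y\in(1,2)$. The paper's formulation has the minor advantage of exhibiting $-\tfrac{Y}{2}$ as the sharp threshold (the least upper bound of $f$), which your substitution confirms but does not emphasize.
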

\begin{proof}
According to the definition of $\phi(\lambda),$ we have
$$\phi(\lambda)=\left(\gamma_{1} e^{h \lambda}+\gamma_{2}+\gamma_{3} e^{-h \lambda}\right)\left(1-e^{-h \lambda}\right)^{Y}.$$
For $h\lambda \geq0,$ the function $\left(1-e^{-h \lambda}\right)^{Y}\geq0.$ Then our goal is to prove
$$\phi_1(\lambda)=\gamma_{1} e^{h \lambda}+\gamma_{2}+\gamma_{3} e^{-h \lambda} \geq 0.$$
 Substituting the expressions of parameters $\gamma_1,~\gamma_2$ (see \eqref{gamma3}) into $\phi_1(\lambda),$ we get
\begin{eqnarray}
  \phi_1(\lambda)=\Big(\frac{Y}{2}+\gamma_3\Big) e^{h \lambda}+\frac{2-Y}{2}-2\gamma_3+\gamma_{3} e^{-h \lambda}=\gamma_{3}\big(e^{h \lambda}+e^{-h \lambda}-2\big)+\frac{Y}{2}\big(e^{h \lambda}-1\big)+1.
\end{eqnarray}
 Noting that $e^{h \lambda}+e^{-h \lambda}-2 \geq0.$
 When $e^{h \lambda}+e^{-h \lambda}-2 =0,$ it holds that $\phi_1(\lambda)>0.$
 When $e^{h \lambda}+e^{-h \lambda}-2 >0,$ if
 \begin{equation}\label{gamma3-1}
 \gamma_3\geq \frac{\frac{Y}{2}\big(1-e^{h \lambda}\big)-1}{e^{h \lambda}+e^{-h \lambda}-2 }:=f(h\lambda),
 \end{equation}
then we have $\phi_1(\lambda)\geq0.$ To make sure the formula \eqref{gamma3-1} always holds, we have to compute the maximum of function $f(h\lambda).$ One can check that $f(h\lambda)$ is a monotone increasing function, thus
 \begin{eqnarray}\label{gamma3-2}
\max_{h\lambda\geq0} f(h\lambda)=\lim_{h\lambda\rightarrow +\infty}f(h\lambda):=\lim_{h\lambda\rightarrow +\infty}\frac{\frac{Y}{2}\big(1-e^{h \lambda}\big)-1}{e^{h \lambda}+e^{-h \lambda}-2 }=-\frac{Y}{2}.
 \end{eqnarray}
 That is, if $\gamma_3\geq-\frac{Y}{2}$, we have $\phi_1(\lambda)\geq0.$\\In short, for $Y\in(1,2)$ and $\lambda\geq0,$ if $\gamma_3\geq-\frac{Y}{2}$, we have
$\phi(\lambda)\geq0.$ $\Box$
\end{proof}

\begin{lemma}\label{keylemma}
For any function $v \in \mathcal{V}_h$, and $Y\in(1,2).$ If $\gamma_3\geq-\frac{Y}{2}$ and $0\leq\lambda\leq \frac{c_0}{h},$  then we have
\begin{eqnarray}
&&\langle {_{L} \mathfrak{D}_{h}^{Y, \lambda}} v,v\rangle \leq c_2\Big(\frac{2}{x_R-x_L}\Big)^Y||v||^2,\label{keylemma-1}\\
&& \langle {_{R} \mathfrak{D}_{h}^{Y, \lambda}} v,v\rangle\leq c_2\Big(\frac{2}{x_R-x_L}\Big)^Y||v||^2,\label{keylemma-2}
\end{eqnarray}
where $c_0$ and $c_2$ are positive constants independent of $h$ and $\tau.$
\end{lemma}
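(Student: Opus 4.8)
The plan is to convert the bilinear form $\langle {_{L}\mathfrak{D}_{h}^{Y,\lambda}} v,v\rangle$ into a matrix quadratic form and then bound the largest eigenvalue of its symmetric part. Writing $v=(v_1,\dots,v_{N_x-1})^\top$ and reading off the weights from \eqref{WSGD-2ord}, the operator acts through a banded Toeplitz-type matrix $B_\lambda$ with entries $g^{(Y)}_{n-m+1,\lambda}$, together with the diagonal correction $-\phi(\lambda)$. Thus
\begin{equation*}
\langle {_{L}\mathfrak{D}_{h}^{Y,\lambda}} v,v\rangle = h^{1-Y}\, v^\top\!\left(B_\lambda-\phi(\lambda)I\right)v = \frac{h^{1-Y}}{2}\, v^\top\!\left(B_\lambda+B_\lambda^\top-2\phi(\lambda)I\right)v ,
\end{equation*}
since only the symmetric part of a matrix contributes to a real quadratic form. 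It therefore suffices to estimate the maximal eigenvalue $\mu_{\max}$ of the symmetric matrix $H_\lambda:=\tfrac12(B_\lambda+B_\lambda^\top)-\phi(\lambda)I$, after which the target bound follows from $v^\top H_\lambda v\leq\mu_{\max}\|v\|_{\ell^2}^2$.

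First I would treat the untempered, infinite-grid model through its generating function. Using $\sum_l\omega_l z^l=(1-z)^Y$ together with the weight recursions in \eqref{weight-g}, the symbol attached to the weights is $(\gamma_1+\gamma_2 z+\gamma_3 z^2)(1-z)^Y$, while the index shift $x_{n-l+1}$ in \eqref{WSGD-2ord} contributes a factor $z^{-1}$. Restricting to $z=e^{i\theta}$, the key fact I would establish is that for $Y\in(1,2)$ and $\gamma_3\geq-\tfrac{Y}{2}$ one has $\operatorname{Re}\!\left(e^{-i\theta}(\gamma_1+\gamma_2 e^{i\theta}+\gamma_3 e^{2i\theta})(1-e^{i\theta})^Y\right)\leq0$ for all $\theta$. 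By the Toeplitz--Fourier correspondence (Parseval applied to the zero-extended grid function) this makes the symmetric part of the infinite-grid operator negative semidefinite, which is the heart of the dissipativity.

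Next I would reinstate the tempering. The exponential factors $e^{\pm h\lambda}$ and $e^{-(l-1)h\lambda}$ appearing in \eqref{weight-g} and in $\phi(\lambda)$ are all controlled once $0\leq\lambda\leq c_0/h$, because then $e^{\pm h\lambda}\leq e^{\pm c_0}$, and these bounded factors are absorbed into the constant $c_2$. Lemma~\ref{keylemma-phi} guarantees $\phi(\lambda)\geq0$, so the diagonal correction $-\phi(\lambda)I$ only shifts the spectrum of $H_\lambda$ downward and can never spoil an upper bound. What remains is the gap between the infinite-grid (negative semidefinite) operator and its finite $(N_x-1)\times(N_x-1)$ truncation: the boundary rows contribute a symmetric perturbation whose largest eigenvalue I would estimate, by summing the tails of the weights $g^{(Y)}_{l,\lambda}$, to be of order $(2/N_x)^Y$. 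Hence $\mu_{\max}(H_\lambda)\leq c_2(2/N_x)^Y$, and converting back through $\langle\cdot,\cdot\rangle=h\langle\cdot,\cdot\rangle_{\ell^2}$ together with $x_R-x_L=N_x h$ produces exactly the factor $\big(\tfrac{2}{x_R-x_L}\big)^Y$ in \eqref{keylemma-1}. The estimate \eqref{keylemma-2} for ${_{R}\mathfrak{D}_{h}^{Y,\lambda}}$ then follows by the reflection $x\mapsto-x$, which interchanges the left and right operators while leaving all hypotheses invariant.

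The main obstacle is the sign analysis of $\operatorname{Re}\!\left(e^{-i\theta}(\gamma_1+\gamma_2 e^{i\theta}+\gamma_3 e^{2i\theta})(1-e^{i\theta})^Y\right)$. Writing $(1-e^{i\theta})^Y=(2\sin(\theta/2))^Y e^{iY(\theta-\pi)/2}$ reduces this to a delicate trigonometric inequality whose validity over $\theta\in(0,2\pi)$ hinges precisely on $Y\in(1,2)$ and on the admissibility restriction $\gamma_3\geq-\tfrac{Y}{2}$, the same constraint invoked in Lemma~\ref{keylemma-phi}. The second delicate point is bookkeeping on the finite-size truncation: one must verify that the boundary defect genuinely scales like $(x_R-x_L)^{-Y}$ rather than being merely $\mathcal{O}(1)$, so that the stated constant $c_2$ is independent of $h$ and $\tau$.
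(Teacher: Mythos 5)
Your reduction to the symmetric part of a Toeplitz-type matrix is a legitimate strategy in principle, but the ``key fact'' on which your whole argument rests is false under the stated hypotheses. Evaluate your symbol at $\theta=\pi$: using \eqref{gamma3},
\[
\operatorname{Re}\Big(e^{-i\pi}\big(\gamma_1+\gamma_2e^{i\pi}+\gamma_3e^{2i\pi}\big)\big(1-e^{i\pi}\big)^Y\Big)
=-2^{Y}\big(\gamma_1-\gamma_2+\gamma_3\big)=-2^{Y}\big(Y-1+4\gamma_3\big),
\]
which is strictly positive whenever $\gamma_3<\frac{1-Y}{4}$. Since $-\frac{Y}{2}<\frac{1-Y}{4}$ for every $Y\in(1,2)$, the hypothesis $\gamma_3\ge-\frac{Y}{2}$ admits such $\gamma_3$, so the generating function does \emph{not} have non-positive real part on the whole unit circle; the negative semi-definiteness of the (tempered-)WSGD matrices requires two-sided constraints on $\gamma_3$, which are genuinely stronger than the one-sided condition assumed in this lemma. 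With a bulk symbol whose real part is positive somewhere, the infinite-grid operator is not dissipative, and its contribution to the quadratic form is of order $h^{-Y}\|v\|^2$; it cannot be reclassified as a boundary defect of size $\mathcal{O}(N_x^{-Y})$, which is the second step you flag as ``delicate bookkeeping'' but do not supply. So the two load-bearing claims of your plan --- non-positivity of the symbol and the $(2/N_x)^Y$ truncation estimate --- are respectively false and unproved.

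For comparison, the paper's own proof never touches the Fourier symbol: after discarding the diagonal term $-\phi(\lambda)\sum_n v_n^2\le 0$ via Lemma~\ref{keylemma-phi}, it applies $|v_nv_m|\le\frac{1}{2}(v_n^2+v_m^2)$ term by term to reach $\langle{_{L}\mathfrak{D}_{h}^{Y,\lambda}}v,v\rangle\le h^{-Y}\sum_l|g^{(Y)}_{l,\lambda}|\,\|v\|^2$, then bounds $\sum_l|g^{(Y)}_{l,\lambda}|$ by a multiple of $\sum_l|\omega_l|$ using $h\lambda\le c_0$ together with the decay $|\omega_l|\le Y2^{Y+1}(l+1)^{-Y-1}$. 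Be aware that in either route the real difficulty is the same one you identified at the end: converting an a priori $h^{-Y}$-sized bound into the $h$-independent factor $\big(\tfrac{2}{x_R-x_L}\big)^{Y}$. Neither your boundary-defect heuristic nor a triangle-inequality bound on $\sum_l|\omega_l|$ (which is only $\mathcal{O}(1)$) produces that factor automatically, so this is precisely the point you would need to repair before the argument can stand.
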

\begin{proof}
Recalling the definition of ${_{L} \mathfrak{D}_{h}^{Y, \lambda}} $ in \eqref{WSGD-2ord} and applying Lemma \ref{keylemma-phi}, we have
\begin{eqnarray}\label{key-1}
&&\langle {_{L} \mathfrak{D}_{h}^{Y, \lambda}} v,v\rangle\nonumber\\
&=&h\sum\limits_{n=1}^{N_x-1}\frac{1}{h^{Y}}\left[\sum\limits_{l=0}^{n+1}g_{l,\lambda}^{(Y)}v_{n-l+1}-\phi(\lambda)v_n\right]v_n,\nonumber\\
& \leq&\frac{h}{h^Y} \left[  g_{1,\lambda}^{(Y)}\sum\limits_{n=1}^{N_x-1} v_n^2+\big(g_{0,\lambda}^{(Y)}+g_{2,\lambda}^{(Y)}\big)\sum\limits_{n=1}^{N_x-2} v_n v_{n+1}+\sum\limits_{l=3}^{N_x-1}g_{l,\lambda}^{(Y)}\sum\limits_{n=1}^{N_x-l}v_{n+l-1}v_n \right]\nonumber\\
& \leq& \frac{1}{h^Y} \left[  g_{1,\lambda}^{(Y)}||v||^2+\big(g_{0,\lambda}^{(Y)}+g_{2,\lambda}^{(Y)}\big)h\sum\limits_{n=1}^{N_x-2}\frac{v_n^2+v_{n+1}^2}{2} \right.\left.+  \sum\limits_{l=3}^{N_x-1}g_{l,\lambda}^{(Y)}h\sum\limits_{n=1}^{N_x-l}\frac{v_{n+l-1}^2+v_n^2 }{2}\right]\nonumber\\
& \leq& \frac{1}{h^Y} \sum\limits_{l=0}^{N_x-1}|g_{l,\lambda}^{(Y)}| ||v||^2.
\end{eqnarray}
According to the definition of $g_{l,\lambda}^{(Y)}$ in \eqref{weight-g} and noting $h\lambda\leq c_0, ~Y\in(1,2)$, there exists an appropriate positive constant $c_1$  such that
\begin{eqnarray}\label{key-2}
&& \sum\limits_{l=0}^{N_x-1}|g_{l,\lambda}^{(Y)}| = |\gamma_{1} \omega_{0} e^{h \lambda}|+| \gamma_{1} \omega_{1}+\gamma_{2} \omega_{0} |
\nonumber +\sum\limits_{l=2}^{N_x-1}\left(\gamma_{1} \omega_{l}+\gamma_{2} \omega_{l-1}+\gamma_{3} \omega_{l-2}\right) e^{-(l-1) h \lambda}\nonumber\\
& \leq& c_1\left[|\omega_{0}|+|\omega_{1}|+\sum\limits_{l=2}^{N_x-1} \big(|\omega_{l}|+| \omega_{l-1}|+| \omega_{l-2}|\big)\right]\nonumber\\
& \leq &3c_1\sum\limits_{l=0}^{\infty}|\omega_{l}|,
\end{eqnarray}
where we have used the fact that the parameters $\gamma_1$ and $\gamma_2$ are constants for fixed $Y$ and $\gamma_3.$

By virtue of the properties of the weights $\omega_{l}$ \cite{Dimitrov2014},
\begin{equation*}
 |\omega_{l}|\leq \frac{Y2^{Y+1}}{(l+1)^{Y+1}},~~~l\geq 0,
\end{equation*}
it holds that
\begin{eqnarray}\label{key-3}
 \sum\limits_{l=0}^{\infty}|\omega_{l}|
\leq \sum\limits_{l=0}^{{N_x} -1}\frac{Y2^{Y+1}}{(l+1)^{Y+1}}+\sum\limits_{l={N_x} }^{\infty}\frac{Y2^{Y+1}}{(l+1)^{Y+1}}.
\end{eqnarray}

Noting that for $x\geq0,$  $\frac{1}{(x+1)^{Y+1}}$ is a monotone decreasing function, we arrive at
\begin{equation}\label{key-4}
 \sum\limits_{l={N_x} }^{\infty} \frac{1}{(l+1)^{Y+1}}\leq  \sum\limits_{l={N_x} }^{\infty} \int_{l}^{l+1}\frac{1}{(x+1)^{Y+1}} dx=\frac{1}{Y(N_x +1)^Y}.
\end{equation}

Therefore, together with \eqref{key-1}-\eqref{key-4}, there exists a positive constant $c_2$ such that
\begin{eqnarray}\label{key-5}
&&\langle {_{L} \mathfrak{D}_{h}^{Y, \lambda}} v,v\rangle\nonumber\\
&&
\leq \frac{3c_1}{h^Y}  \sum\limits_{l=0}^{\infty}|\omega_{l}| ||v||^2\nonumber\\
&& \leq \frac{3c_1}{h^Y} \Big[ Y2^{Y+1}\Big(N_x + \frac{1}{Y(N_x +1)^Y}\Big) \Big]||v||^2\nonumber\\
&&\leq\frac{c_2}{h^Y}\Big(\frac{2}{N_x +1}\Big)^Y||v||^2=c_22^Yh^{-Y}N_x^{-Y}\frac{N_x^Y}{(N_x+1)^Y}||v||^2\nonumber\\
&&\leq c_2 \Big(\frac{2}{x_R-x_L} \Big)^Y||v||^2.
\end{eqnarray}

Similarly, we have
\begin{eqnarray}
 \langle {_{R} \mathfrak{D}_{h}^{Y, \lambda}} v,v\rangle\leq c_2\Big(\frac{2}{x_R-x_L}\Big)^Y||v||^2.
\end{eqnarray}
$\Box$
\end{proof}

\begin{lemma}\cite{sunbook}\label{grw}
Suppose that $\{ F^j|~ j\geq 0\}$ is a non-negative sequence and satisfies
$$F^{j+1} \leq(1+c \tau) F^{j}+\tau g, ~~ j=0,1,2, \cdots, $$
where $c$ and $g$ are two non negative constants.
Then we have
$$F^{j} \leq e^{c j \tau}\left(F^{0}+\frac{g}{c}\right), ~~ j=1,2,3, \cdots$$
\end{lemma}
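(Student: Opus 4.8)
The plan is to prove the inequality by unrolling the one-step recurrence into a closed form and then applying the elementary bound $1+x\le e^{x}$. Throughout I assume $c>0$, so that the quantity $g/c$ in the conclusion is well defined; the degenerate case $c=0$ is trivial, since then the hypothesis gives $F^{j}\le F^{0}+j\tau g$ directly and the stated bound holds vacuously in the limit $c\to 0^{+}$.

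First I would set $a:=1+c\tau$, which is positive, and rewrite the hypothesis as $F^{j+1}\le a F^{j}+\tau g$. Iterating this estimate $j$ times, and using that $a\ge 0$ so the inequality is preserved when multiplied through at each stage, I obtain
\[
F^{j}\le a^{j}F^{0}+\tau g\sum_{k=0}^{j-1}a^{k}.
\]
Next I evaluate the geometric sum. Since $a-1=c\tau$, one has $\sum_{k=0}^{j-1}a^{k}=\frac{a^{j}-1}{a-1}=\frac{a^{j}-1}{c\tau}$, so the second term collapses to $\frac{g}{c}\big(a^{j}-1\big)$. Combining and regrouping,
\[
F^{j}\le a^{j}F^{0}+\frac{g}{c}\big(a^{j}-1\big)=a^{j}\Big(F^{0}+\frac{g}{c}\Big)-\frac{g}{c}\le a^{j}\Big(F^{0}+\frac{g}{c}\Big),
\]
where the last step discards the nonpositive term $-g/c$.

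Finally I would invoke the standard inequality $1+x\le e^{x}$, valid for all real $x$, with $x=c\tau$, giving $a=1+c\tau\le e^{c\tau}$ and hence $a^{j}\le e^{cj\tau}$. Substituting this into the previous bound yields exactly $F^{j}\le e^{cj\tau}\big(F^{0}+g/c\big)$, as claimed. The argument presents no genuine obstacle; the only points requiring care are the bookkeeping of the geometric series together with the observation $a-1=c\tau$, which is precisely what converts the $\tau g$ factor into $g/c$, and the caveat separating out the $c=0$ case.
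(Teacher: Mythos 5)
Your proof is correct. The paper does not prove this lemma at all --- it is quoted verbatim from the cited reference (Sun's book) as a standard discrete Gr\"onwall inequality --- and your argument (unrolling the recurrence with $a=1+c\tau$, summing the geometric series via $a-1=c\tau$, and bounding $(1+c\tau)^j\le e^{cj\tau}$) is exactly the standard derivation, including the sensible caveat about the degenerate case $c=0$.
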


For $j=1,2, \ldots, N_t,$ let $\epsilon^{j}=V^{j}-v^{j},$ where $V^{j}=\left(V_{1}^{j}, V_{2}^{j}, \ldots, V_{N_x-1}^{j}\right)^{T}$ and $v^{j}=\left(v_{1}^{j}, v_{2}^{j}, \ldots, v_{N_x-1}^{j}\right)^{T}.$ We have the following convergence result.
\begin{theorem}\label{convergence}
For $Y\in(1,2)$, $\gamma_3\geq-\frac{Y}{2}$, $0\leq G\leq\frac{c_0}{h} $ and $0\leq M\leq\frac{c_0}{h}$, we have
$$\left\|\epsilon^{j}\right\| \leq \hat{c}\left(h^{2}+\tau^{2}\right),~~ j=1,2, \ldots, N_t,$$
where $\hat{c}$ denotes a positive constant independent of $h$ and $\tau.$
\end{theorem}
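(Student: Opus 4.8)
The plan is to use the discrete energy method on the error equation. First I would record the equation satisfied by $\epsilon^{j}=V^{j}-v^{j}$: subtracting the numerical scheme \eqref{numeric} from the exact relation \eqref{exact} gives
\[
\delta_t \epsilon_{n}^{j+\frac{1}{2}}+(r-\nu)\delta_{x0} \epsilon_{n}^{j+\frac{1}{2}}+C\Gamma (-Y)\big[{_{L} \mathfrak{D}_{h}^{Y, G}} \epsilon_{n}^{j+\frac{1}{2}}+ {_{R} \mathfrak{D}_{h}^{Y, M}} \epsilon_{n}^{j+\frac{1}{2}}\big]-r \epsilon_{n}^{j+\frac{1}{2}} = R_n^{j+\frac{1}{2}},
\]
where $\epsilon^{0}=0$ because the terminal/initial data are imposed exactly, and $\epsilon_{0}^{j}=\epsilon_{N_x}^{j}=0$ by the homogeneous boundary conditions. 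I would then take the discrete inner product of this identity with $\epsilon^{j+\frac12}$ and estimate the four left-hand terms and the residual separately.

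The term-by-term analysis is where the chosen hypotheses come in. For the temporal term, the identity $\langle a-b,a+b\rangle=\|a\|^2-\|b\|^2$ together with $\delta_t\epsilon^{j+\frac12}=(\epsilon^{j+1}-\epsilon^j)/(-\tau)$ telescopes it into $\frac{1}{2\tau}\big(\|\epsilon^{j}\|^2-\|\epsilon^{j+1}\|^2\big)$. The convection term drops out: a summation by parts shows $\delta_{x0}$ is skew-symmetric under $\langle\cdot,\cdot\rangle$ once the boundary values vanish, so $\langle\delta_{x0}\epsilon^{j+\frac12},\epsilon^{j+\frac12}\rangle=0$ regardless of the coefficient $r-\nu$. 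The two fractional contributions are handled by Lemma \ref{keylemma}, whose hypotheses $\gamma_3\geq-\frac{Y}{2}$ and $0\leq G,M\leq c_0/h$ are precisely those assumed in the theorem; since $C\Gamma(-Y)>0$ for $Y\in(1,2)$, this bilinear form is bounded \emph{above} by $2c_2C\Gamma(-Y)\big(\tfrac{2}{x_R-x_L}\big)^Y\|\epsilon^{j+\frac12}\|^2$. The reaction term $-r\|\epsilon^{j+\frac12}\|^2$ is sign-definite (or folds into the constant), and the residual is controlled by Cauchy--Schwarz and Young's inequality, $\langle R^{j+\frac12},\epsilon^{j+\frac12}\rangle\leq\frac12\|R^{j+\frac12}\|^2+\frac12\|\epsilon^{j+\frac12}\|^2$, while \eqref{trunc1} with $hN_x=x_R-x_L$ gives $\|R^{j+\frac12}\|^2\leq (x_R-x_L)C_R^2(\tau^2+h^2)^2$ uniformly in $j$.

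Collecting everything with $\Lambda:=\tfrac12+2c_2C\Gamma(-Y)\big(\tfrac{2}{x_R-x_L}\big)^Y$, I would obtain
\[
\frac{1}{2\tau}\big(\|\epsilon^{j+1}\|^2-\|\epsilon^{j}\|^2\big)\leq \Lambda\,\|\epsilon^{j+\frac12}\|^2+\tfrac12\|R^{j+\frac12}\|^2 .
\]
Using the convexity bound $\|\epsilon^{j+\frac12}\|^2\leq\frac12\big(\|\epsilon^{j+1}\|^2+\|\epsilon^{j}\|^2\big)$, multiplying by $2\tau$, and moving the resulting $\Lambda\tau\|\epsilon^{j+1}\|^2$ to the left, I would impose $\Lambda\tau\leq\frac12$ so that the coefficient $1-\Lambda\tau$ stays positive; dividing through then yields a recursion of the form $F^{j+1}\leq(1+c\tau)F^{j}+\tau g$ with $F^{j}=\|\epsilon^{j}\|^2$, a constant $c=O(\Lambda)$, and $g=O\big((\tau^2+h^2)^2\big)$. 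Lemma \ref{grw} with $F^{0}=0$ and $j\tau\leq T$ finishes the proof: $F^{j}\leq e^{cT}g/c$, whence $\|\epsilon^{j}\|\leq\hat c\,(h^2+\tau^2)$ with $\hat c$ independent of $h$ and $\tau$.

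I expect the main obstacle to be bookkeeping rather than a single hard estimate, since the heavy analytic work on the tempered fractional operators is already encapsulated in Lemma \ref{keylemma}. The delicate points are getting the signs to align (the positivity of $C\Gamma(-Y)$ so that Lemma \ref{keylemma} points the inequality in the useful direction, and the skew-symmetry that kills the convection term) and, most of all, absorbing the implicit $\|\epsilon^{j+1}\|^2$ contribution generated by the Crank--Nicolson averaging, which is what forces the mild time-step restriction $\Lambda\tau\leq\frac12$ needed to keep $1-\Lambda\tau>0$ before invoking the discrete Grönwall inequality.
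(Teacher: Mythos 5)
Your proposal is correct and follows essentially the same discrete energy argument as the paper: the same error equation, the same treatment of the temporal, convection, fractional, and residual terms (with Lemma \ref{keylemma} doing the heavy lifting), the same mild time-step restriction to absorb the implicit $\|\epsilon^{j+1}\|^2$ contribution, and the same discrete Gr\"onwall conclusion. The only cosmetic difference is that the paper chooses the Young-inequality parameter $\varrho=\tfrac{1}{2r}$ to cancel the reaction term exactly, whereas you fold that term into the Gr\"onwall constant; both choices work.
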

\begin{proof}
  Subtracting \eqref{numeric} from \eqref{exact}, we get the following error equation

 \begin{eqnarray}\label{erreqn}
\delta_t \epsilon_{n}^{j+\frac{1}{2}}+(r-\nu)\delta_{x0} \epsilon_{n}^{j+\frac{1}{2}}+C\Gamma (-Y)[_{L} \mathfrak{D}_{h}^{Y, G} \epsilon_{n}^{j+\frac{1}{2}}+  _{R} \mathfrak{D}_{h}^{Y, M} \epsilon_{n}^{j+\frac{1}{2}}]-r \epsilon_{n}^{j+\frac{1}{2}} =R_n^{j+\frac{1}{2}},~~~~0 \leq j \leq N_t-1,~~~1\leq n \leq N_x-1.
\end{eqnarray}

  Taking inner product $\langle \cdot,\cdot\rangle$ on both sides of Eq. \eqref{erreqn} with $\epsilon^{j+\frac{1}{2}}$, we have
\begin{eqnarray}\label{inner-eqn}
&&~~~\langle\delta_t \epsilon^{j+\frac{1}{2}},\epsilon^{j+\frac{1}{2}}\rangle+(r-\nu)\langle\delta_{x0} \epsilon^{j+\frac{1}{2}},\epsilon^{j+\frac{1}{2}}\rangle C\Gamma (-Y)[\langle_{L} \mathfrak{D}_{h}^{Y, G} \epsilon^{j+\frac{1}{2}},\epsilon^{j+\frac{1}{2}}\rangle+ \langle _{R} \mathfrak{D}_{h}^{Y, M} \epsilon^{j+\frac{1}{2}},\epsilon^{j+\frac{1}{2}}\rangle]\nonumber\\
&&=r \langle \epsilon^{j+\frac{1}{2}},\epsilon^{j+\frac{1}{2}}\rangle+\langle R^{j+\frac{1}{2}},\epsilon^{j+\frac{1}{2}}\rangle.
\end{eqnarray}
For the first term of the left-hand side (LHS) of \eqref{inner-eqn}, it holds that

 \begin{eqnarray}\label{lhs-1}
\langle\delta_t \epsilon^{j+\frac{1}{2}},\epsilon^{j+\frac{1}{2}}\rangle=\langle \frac{\epsilon^{j+1}-\epsilon^j}{-\tau},\frac{\epsilon^{j+1}+\epsilon^j}{2}\rangle=\frac{||\epsilon^{j+1}||^2-||\epsilon^{j}||^2}{-2\tau}.
\end{eqnarray}

For the second term of the LHS of \eqref{inner-eqn}, noting that $\epsilon_0^j=\epsilon_{N_x}^j=0~(0\leq j \leq N_t),$ we have
 \begin{eqnarray}\label{lhs-2}
(r-\nu)\langle\delta_{x0} \epsilon^{j+\frac{1}{2}},\epsilon^{j+\frac{1}{2}}\rangle=(r-\nu)h\sum\limits_{n=1}^{{N_x} -1} \Big(\frac{\epsilon_{n+1}^{j+\frac{1}{2}}-\epsilon_{n-1}^{j+\frac{1}{2}}}{2 h}\Big)\epsilon_n^{j+\frac{1}{2}}=0.
\end{eqnarray}

Applying Lemma \ref{keylemma} to the third term of the LHS of \eqref{inner-eqn} yields
 \begin{eqnarray}\label{lhs-3}
&&C\Gamma (-Y)[\langle_{L} \mathfrak{D}_{h}^{Y, G} \epsilon^{j+\frac{1}{2}},\epsilon^{j+\frac{1}{2}}\rangle+ \langle _{R} \mathfrak{D}_{h}^{Y, M} \epsilon^{j+\frac{1}{2}},\epsilon^{j+\frac{1}{2}}\rangle]\nonumber\\
&&\leq 2 C\Gamma (-Y)c_2\Big(\frac{2}{x_R-x_L}\Big)^Y||\epsilon^{j+\frac{1}{2}}||^2.
\end{eqnarray}
Combining with \eqref{lhs-1}-\eqref{lhs-3} and using the $\varrho-$inequality ($bd\leq\frac{1}{2\varrho}b^2+\frac{\varrho}{2} d^2$), we arrive at
 \begin{eqnarray}\label{tran-1}
&&||\epsilon^{j+1}||^2-||\epsilon^{j}||^2-2\tau C\Gamma (-Y)c_2\Big(\frac{2}{x_R-x_L}\Big)^Y\Big(||\epsilon^{j+1}||^2+||\epsilon^{j}||^2\Big)\nonumber\\
&&\leq-2\tau r ||\epsilon^{j+\frac{1}{2}}||^2-2\tau\langle R^{j+\frac{1}{2}},\epsilon^{j+\frac{1}{2}}\rangle \nonumber\\
&&\leq -2\tau r ||\epsilon^{j+\frac{1}{2}}||^2+ \tau \Big( \varrho ||R^{j+\frac{1}{2}}||^2+\frac{1}{\varrho}||\epsilon^{j+\frac{1}{2}}||^2\Big).
\end{eqnarray}
Taking $\varrho=\frac{1}{2r}$, we can eliminate the term $||\epsilon^{j+\frac{1}{2}}||^2$. After some calculations, then we obtain

  \begin{eqnarray}\label{tran-2}
&&\Big[ 1-2\tau C\Gamma (-Y)c_2\Big(\frac{2}{x_R-x_L}\Big)^Y\Big]||\epsilon^{j+1}||^2\nonumber\\
&&\leq\Big[ 1+2\tau C\Gamma (-Y)c_2\Big(\frac{2}{x_R-x_L}\Big)^Y\Big]||\epsilon^{j}||^2+\frac{\tau}{2r}||R^{j+\frac{1}{2}}||^2.
\end{eqnarray}

Letting $2\tau C\Gamma (-Y)c_2\Big(\frac{2}{x_R-x_L}\Big)^Y\leq \frac{1}{3}$, we have
  \begin{eqnarray}\label{tran-3}
&&||\epsilon^{j+1}||^2\leq\Big[ 1+6\tau C\Gamma (-Y)c_2\Big(\frac{2}{x_R-x_L}\Big)^Y\Big]||\epsilon^{j}||^2\nonumber\\
&&~~~~~~~~~~~~~~~~~+\frac{3\tau}{4r}||R^{j+\frac{1}{2}}||^2.
\end{eqnarray}

Applying Lemma \ref{grw} to \eqref{tran-3} results in
  \begin{eqnarray}\label{tran-4}
||\epsilon^{j}||^2\leq \text{exp}\left ( 6T C\Gamma (-Y)c_2\Big(\frac{2}{x_R-x_L}\Big)^Y\right ) \frac{(x_R-x_L)^{1+Y}C_R^2}{2^{3+Y}rC\Gamma (-Y)c_2}  (\tau^2+h^2)^2.
\end{eqnarray}
That is,
\begin{equation}
 \left\|\epsilon^{j}\right\| \leq \hat{c}\left(h^{2}+\tau^{2}\right),~~ j=1,2, \ldots, N_t,
\end{equation}
where $$\hat{c}=\sqrt{ \text{exp}\left (6T C\Gamma (-Y)c_2\Big(\frac{2}{x_R-x_L}\Big)^Y \right) \frac{(x_R-x_L)^{1+Y}C_R^2}{2^{3+Y}rC\Gamma (-Y)c_2} },$$ which is a positive constant independent of $h$ and $\tau.$
$\Box$
\end{proof}
\subsection*{Monte Carlo-COS method}
The COS method is an efficient option pricing method proposed by Fang \cite{fang2009novel}. The central idea of this method is to estimate the probability density function via a Fourier cosine expansion. Since the characteristic function of the L\'evy process has a closed-form relation with the Fourier cosine series coefficients, the COS method can be applied to many complex underlying price processes, including the CGMY process. Incorporated with Monte Carlo method, the so-called Monte Carlo-COS (MC-COS) method is a benchmark approach to compute the exposure of the Bermudan option \cite{shen2013benchmark}.  \\
Recall the Section II.B, for a Bermudan option, if we define $x=\text{log}(\frac{S_{t_{m}}}{K})$, $y=\text{log}(\frac{S_{t_{m+1}}}{K})$, and denote $\hat{V}(y)=V(K e^y)=V(S)$, then the continuous value \eqref{Cvalue} is transformed to:
\begin{eqnarray}
\label{VCdensity}
      V^{c}(S_{t_{m}},t_{m})=e^{-r\Delta t}\mathbb{E}\left[ \hat{V}(y,t_{m+1})\mid x\right] =e^{-r\Delta t}\int_{\mathbb{R}}\hat{V}(y,t_{m+1})f(y|x)dy,
\end{eqnarray}
where $f(y|x)$ is the conditional density function of $y$. Based on Fourier cosine expansion, Fang \cite{fang2009novel} proves that \eqref{VCdensity} can be approximated by:
\begin{eqnarray}
\label{Vbar}
    \bar{V}^{c}(S_{t_{m}},t_{m})=e^{-r\Delta t}{\sum_{k=0}^{N-1}}'\text{Re}\left \{ \phi_{X}(\frac{k\pi}{b-a})e^{-ik\pi\frac{a}{b-a}} \right \}\times H_{k}(t_{m+1}),
\end{eqnarray}
where ${\sum}'$ means that the first term of the summation is weighted by $1/2$, $\text{Re}(x)$  is to take real part of $x$, and $\phi_{X}$ is the characteristic function of L\'evy process $L_t$, for the CGMY process,
\begin{equation}
\phi_{CGMY}(z,t):=e^{ tC\Gamma(-Y)\left( (M-iz)^{Y}-M^{Y}+(G+iz)^{Y}-G^{Y}\right)}.
\end{equation}
And $H_{k}$ is the Fourier cosine series coefficients of $\hat{V}(y)$:
\begin{equation}
    H_{k}(t_{m+1}):= \frac{2}{b-a}\int_{a}^{b}\hat{V}(y,t_{m+1})\text{cos}\left( k\pi\frac{y-a}{b-a}\right)dy,
\end{equation}
here $[a,b]$ is the the truncation rage of integration. By the definition in Oosterlee \cite{oosterlee2019mathematical} and Fang's\cite{fang2009novel} works, the integration range can be defined as:
\begin{equation}
    [a,b]:=\left[ (x+\zeta_{1})-L\sqrt{\zeta_{2}+\sqrt{\zeta_{4}}},(x+\zeta_{1})+L\sqrt{\zeta_{2}+\sqrt{\zeta_{4}}}\right],
\end{equation}
where $L\in [6,12]$ is a user-decided parameter to control the tolerance level, and $\zeta_{1},\zeta_{2},\zeta_{4}$ are cumulants of  L\'evy process, for the CGMY process, they are defined as:
\begin{equation}
\begin{split}
    \zeta_{1}=rt+Ct\Gamma(1-Y)(M^{Y-1}-G^{Y-1}),\\
    \zeta_{2}=Ct\Gamma(2-Y)(M^{Y-2}+G^{Y-2}),\\
    \zeta_{4}=C\Gamma(4-Y)t(M^{Y-4}+G^{Y-4}).
    \end{split}
\end{equation}
\\
In order to compute \eqref{Vbar}, we need to find the Fourier cosine series coefficients $H_{k}$ first. As in Fang's paper\cite{fang2009novel}, for the Bermudan option, we need to consider the early exercise time points. Let $x^{*}(t_{m})$ be the point where the continuous value equals the payoff function, it can be found by Newton root finding algorithm. Then $H_{k}(t_{m})$ is split into two interval: $[a,x^{*}(t_{m})]$ and $[x^{*}(t_{m}),b]$. For a call option, in the case that $a<0<b$, we obtain \cite{shen2013benchmark}:
\begin{equation}\label{HkC}
H_{k}(t_{m+1})=\frac{2}{b-a}(\psi_{k}(a,x^{*}(t_{m+1}))+\chi_{k}(x^{*}(t_{m+1}),b))
\end{equation}
for $m=M-2,...,0$ and at $t_{M}=T$,
\begin{equation}
H_{k}(t_{M})=\frac{2}{b-a}\chi_{k}(0,b),
\end{equation}
where the cosine series coefficients $\chi_{k}$ and $\psi_{k}$ on an integration interval $[c,d]\subset[a,b]$ are given by:
\begin{equation}
    \chi_{k}(c,d):=\int_{c}^{d}e^{y}\text{cos}\left( k\pi\frac{y-a}{b-a}\right)dy,
\end{equation}
\begin{equation}
    \psi_{k}(c,d):=\int_{c}^{d}V^{c}(t_{m+1})\text{cos}\left( k\pi\frac{y-a}{b-a}\right)dy
\end{equation}
for $k=0,1,...,N-1$ and $m=0,1,..,M-1$,  $\chi_{k}$ has an analytical solution and $\psi_{k}$  can be  also approximated by a same method, the derivation can be found in Fang\cite{fang2009novel} and Kienitz's\cite{kienitz2013financial} papers. Similarly, for a put option, we have
\begin{equation}\label{HkP}
H_{k}(t_{m+1})=\frac{2}{b-a}(\chi_{k}(a,x^{*}(t_{m+1}))+\psi_{k}(x^{*}(t_{m+1}),b))
\end{equation}
for $m=M-2,...,0$ and at $t_{M}=T$,
\begin{equation}
H_{k}(t_{M})=\frac{2}{b-a}\chi_{k}(a,0).
\end{equation}
\\
Let us consider the Bermudan call option. For each simulated path, we calculate the truncation interval~$[a,b]$ first. Then the Fourier cosine coefficients $H_{k}$ can be obtained by  \eqref{HkC} for a call option. For the terminal time $t_{M}=T$, option value $V(S_{t_{M}},t_{M})=\varphi(S_{t_{M}})$. For other time steps, applying the backward induction, we can get the approximation of continuous value \eqref{Vbar} at $t_{m-1}$ from the value at $t_{m}$. Finding the minimum time point $\tau$ if it exists such that $\varphi(S_{\tau)}\geq V^{c}(S_{\tau}, \tau)$. Then the option value at each time step becomes $V(S_{t_{m}},t_{m})=\text{max}\left(\varphi(S_{t_{m}}), V^{c}(S_{t_{m}},t_{m}) \right)$ and $V(S_{t},t)=0$ for $t>\tau$. Setting the exposure $E(t_{m})=(V(S_{t_{m}},t_{m}),0)^+$, considering all simulated paths, we can get the EE and value adjustments defined in Section 2.\\
As a benchmark method, the MC-COS method has a high accuracy, and there are many discussions about the errors caused by truncated integration ranges, quadrature and propagation, such as Fang \cite{fang2009novel} and Oosterlee\cite{oosterlee2019mathematical}. The disadvantage of this approach is the computing time. Compared with MC-FF, the MC-COS is significantly slower when more Monte Carlo paths are used.

\section*{Numerical Results}
In this section, we will present  several numerical results for value adjustments of the Bermudan option under the CGMY process. The parameters of four different examples can be found in Table I. The initial price $S_{0}=40,$ and the risk-free rate $r=0.05$ are not changed in every experiment. The parameter $C$ of CGMY process may vary in different examples. And the default parameters of CGMY process are set as: $C=1,~G=25,~M=26$ and $Y=1.5.$ In all cases, experiments have been performed by using MATLAB on an Intel(R) Core(TM) i7-8700 CPU computer.  \\
\begin{table}\label{parameters}
	\caption{Parameters of each example.}
\centering
	\begin{tabular}{ccccc}
	\toprule
		~&Example 1&Example 2& Example 3 &Example 4\\
		\cmidrule(r){2-5}
		Strike Price $K$&50&40&50&40\\
		Expiry Time $T$&1&1&0.5 &0.5\\
		Exercise Times&50&50&30&30 \\
		$C$&1&1&0.5&0.5 \\
		\bottomrule
	\end{tabular}

\end{table}

 To reduce the impact of noise of Monte Carlo simulations, we generated $10^{4}$ paths for each example. Figure 1 presents a distribution plot of the terminal price $S_{T}$, which is  generated by Monte Carlo method. It can be seen that most values are within $200$. Every Monte Carlo price path has a similar distribution. Therefore, we set the price boundary of finite difference method and COS method to $400$ to ensure that more than $99\%$ of paths can be used, while also avoiding some noise. \\
\begin{figure}
\centering
\includegraphics[width=0.4\textwidth]{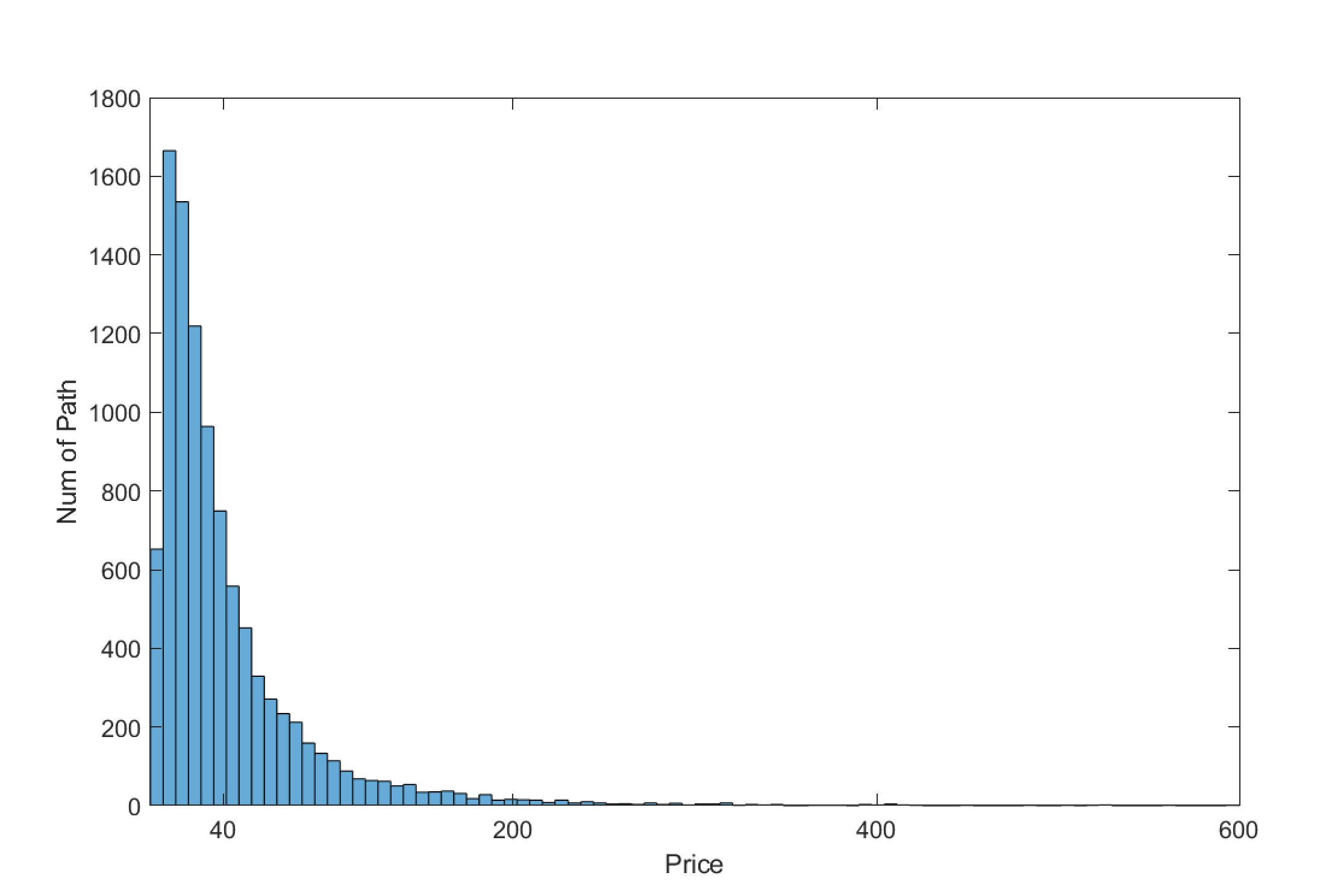}
\caption{\label{F1} Distributions of $S_{T}$ under the CGMY process with default parameters when $S_{0}=40$, generated by Monte Carlo Method.}
\end{figure}



\subsection*{Exposure analyses}
For the Bermudan call option, the trend of EE is to increase first and then decrease, and on the expiry date, the EE will be $0$ since there is no exposure. This trend can be observed in Figure 2 and it presents the EE of examples 1 and 2. The difference between MC-FF method and MC-COS method is quite small, which is also reflected in value adjustments. Figure 3 demonstrates the change of the EE and PFEs of examples 3 and 4, compared with the similar curves of Heston model\cite{del2016efficient}, the EE curve of the CGMY model is more fluctuant due to its pure jump feature.\\
\begin{figure*}
\centering
 \includegraphics[width=0.7\textwidth]{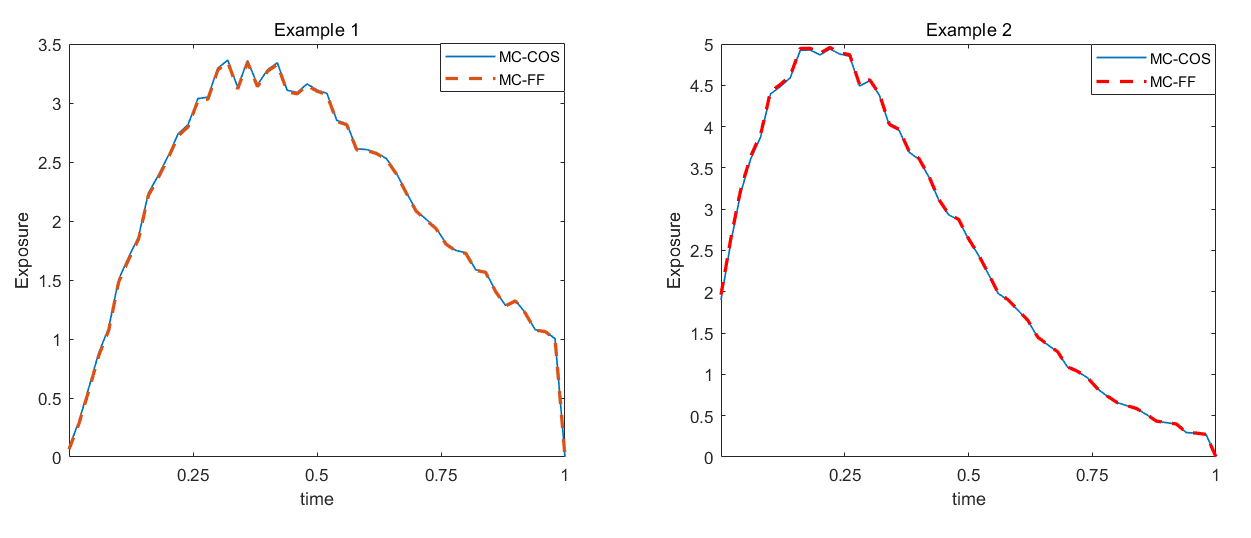}
 \caption{ $EE$ of Examples 1 and 2 for Bermudan call option, comparison of MC-FF method and MC-COS method, for 50 exercise times. }
\end{figure*}
A comparison of examples 1 and 2 in Figure 2 reveals that EE begins at the initial option value and then decreases due to the early exercise probability, but a higher strike price will lead to a more uncertainty. From the plots, we can clearly see that the curve of example 1, which has a higher strike price, is more volatile, while the curve of example 2 is smoother. This result is reasonable. For call options, higher strike prices often mean higher risks. Similarly, the PFEs in Figure 3 also begin with the starting option value because there is no uncertainty at this point. Starting at $t=0$, $PFE_{2.5\%}$ quickly falls to zero, whereas $PFE_{97.5\%}$ is always greater than the EE. Paths will terminate because of the early exercise possibility. That means exercise will occur so that more than 2.5\% of the values are equal to zero shortly. In contrast, the quantity of minimum value for which 97.5\% of the price paths are lower is significantly greater and only reduces later as more and more paths are exercised.

\begin{figure*}
\centering
 \includegraphics[width=0.7\textwidth]{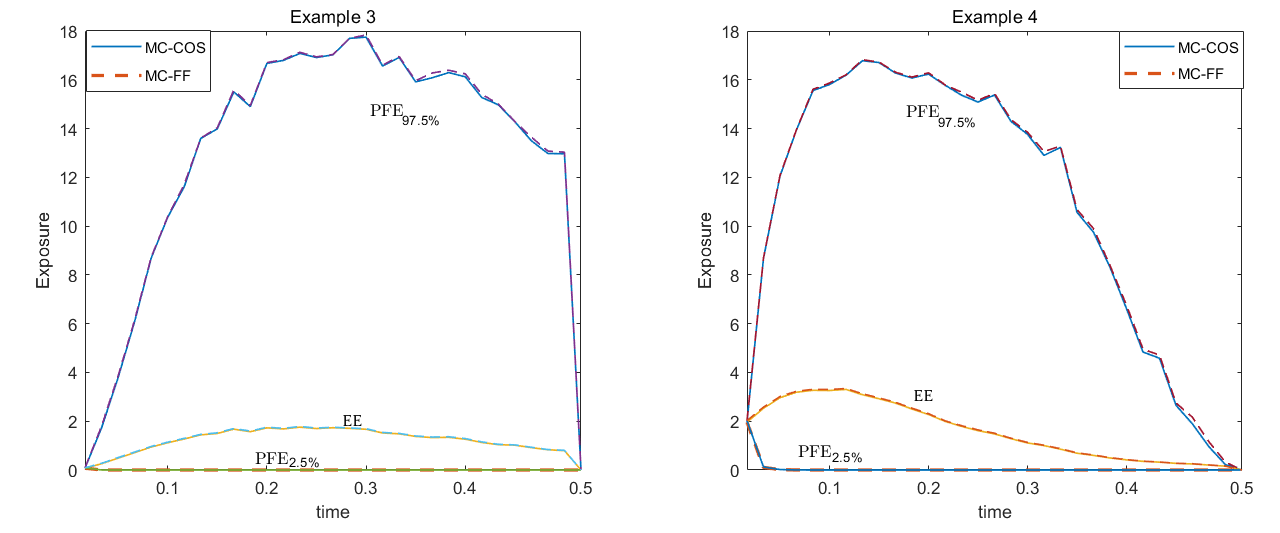}
 \caption{$EE$, $PFE_{97.5\%}$ and $PFE_{2.5\%}$ of Examples 3 and 4 for Bermudan call option, comparison of MC-FF method and MC-COS method, for 12 exercise times. }
\end{figure*}
To investigate the effects of different parameters of the CGMY process on exposure, we conducted four experiments, changing only one parameter each time. In Figure 4, there are clear trends of effects of different parameters. The parameters G and M have almost no effect on the initial value,  but have a greater effect on the high point of the exposure curves. The reason for this feature is that the skewness in the CGMY process is controlled by parameters G and M\cite{cartea2007fractional}. In practice, the selection of parameters for the CGMY process needs to be optimized based on market historical data\cite{bol2008risk}.

\begin{figure*}
\centering
 \includegraphics[width=0.7\textwidth]{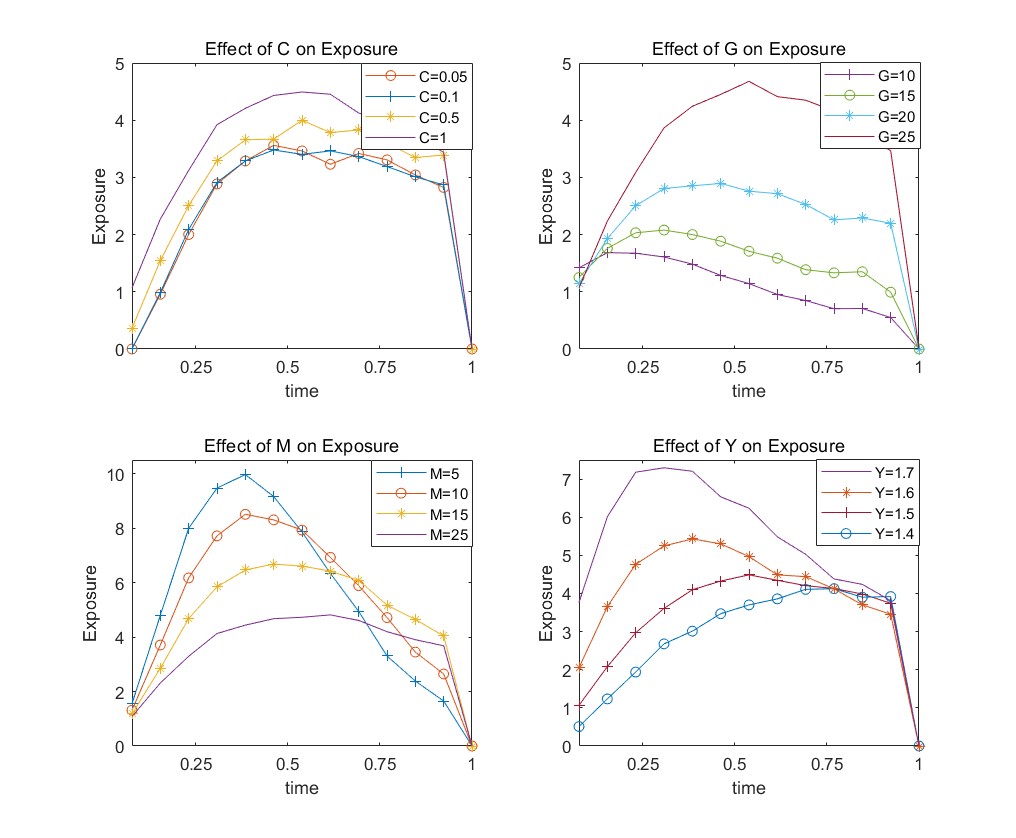}
 \caption{The effects of different parameters in the CGMY model on $EE$ for Bermudan call option, for 12 exercise times. Except for the changing parameters, the other reference parameters are $C=1$, $G=25$, $M=26$, $Y=1.5,$ and strike price $K=50$.}
\end{figure*}

Overall, these results indicate that the trend of EE under the CGMY model is the same as the reality, and the selection of different parameters also has a significant effect on EE. For the calculations of different value adjustments and the comparison of the accuracy and efficiency of the two methods are discussed in the next part.

\subsection*{XVA analyses}
\subsubsection*{Comparison of two methods on XVA}
When we get EE, the value adjustments can be calculated by the definitions in Section II. A. In this work, we take $100$ basis points for the credit spread and $50$ basis points for the funding spread, which are close to the reality values \cite{munro2014monetary}. The total value adjustments of examples 1, 2, 3 and 4 can be found in Table II, and the difference between the two methods is also given. It is apparent from this table that the difference of value adjustments between two methods is quit small. Most of them are smaller than $10^{-3}$. This result suggests that the accuracy of the two methods is very close. As Table II shows, the values of the CVA for both examples 1 and 2 are around 3\%, and the example 2 is slightly larger than example 1 due to the larger exposure. The value of CVA for examples 3 and 4 would be somewhat larger due to their shorter maturities and greater exposures. The overall variation of the FVA is similar to the CVA, although its value is roughly one-third of the CVA, which is also consistent with the theory. In general, the level of XVA is roughly between 4\% and 8\%, which is consistent with the concepts\cite{ruiz2015complete} that value adjustment can be seen as a spread.

Turning to the calculation time in Table III, this data is not similar anymore. It can be seen that the calculation time required by the MC-COS method is very small when there are few simulated paths, but it increases linearly as the number of paths increases. For the MC-FF method, its calculation time does not change significantly with the increase of the simulation path. One reason for this result is that at each exercise time, the MC-COS method performs an additional calculation of the continuation values, which requires interpolation for each path. Although the MC-COS method is more efficient when fewer paths are simulated, it is only accurate enough for Monte Carlo simulation when the number of paths is large enough. In this work, we generated 10,000 Monte Carlo paths for each experiment, and it is clear that the MC-FF method consumes less time.

\begin{table}
\centering
\caption{CVA, FVA and XVA of Examples for the Bermudan call option, comparison and difference of MC-FF method and MC-COS method. }

\begin{tabular}{lllll}
\toprule
	
                     &~~~& MC-FF~~~            & MC-COS~~~&Difference~~~ \\ \cline{2-5}
\multirow{4}{*}{~~~CVA~~~} & Example 1~~ & -3.22\%  &  -3.20\%   &1.70e-04    \\
                     &Example 2~~ & -3.78\%  &-3.77\%  &1.76e-04    \\
                     &Example 3~~&-4.54\% &  -4.47\%  &  7.37e-04 \\
                     &Example 4~~ &  -6.05\%  &-5.97\%& 8.93e-04  \\ \cline{2-5}
\multirow{4}{*}{~~~FVA~~~} & Example 1~~&  -1.08\% &  -1.07\%  &  5.70e-05  \\
                     & Example  2& -1.27\% &    -1.26\% &  5.88e-05 \\
                     & Example 3&  -1.55\% & -1.53\%  & 2.51e-04   \\
                     & Example 4& -2.05\%  &  -2.02\%   & 3.03e-04   \\ \cline{2-5}
\multirow{4}{*}{~~~XVA~~~} & Example 1&  -4.31\%   &-4.28\%  & 2.27e-04   \\
                     &  Example  2&-5.06\%  & -5.04\% &  2.35e-04 \\
                     &  Example 3&-6.09\% & -5.99\%  &  9.89e-04  \\
                     & Example 4&-8.11\% & -8.00\% &  1.19e-03  \\ 	
                     \bottomrule
\end{tabular}

\end{table}
\begin{table}
	\centering
	\caption{Comparison of the calculation time (in seconds) of the two methods with different number of Monte Carlo paths.}

\begin{tabular}{ccccc}
	\toprule
		~&500 Paths&1000 Paths& 5000 Paths &10000 Paths\\
		\cmidrule(r){2-5}
		MC-FF&18.9364&19.3256&21.2162&	22.0326 \\
		MC-COS&3.5263&	6.8362&	36.5698	&76.2355 \\
	\bottomrule
	\end{tabular}

\end{table}

\begin{figure}
	\centering
 \includegraphics[width=0.5\textwidth]{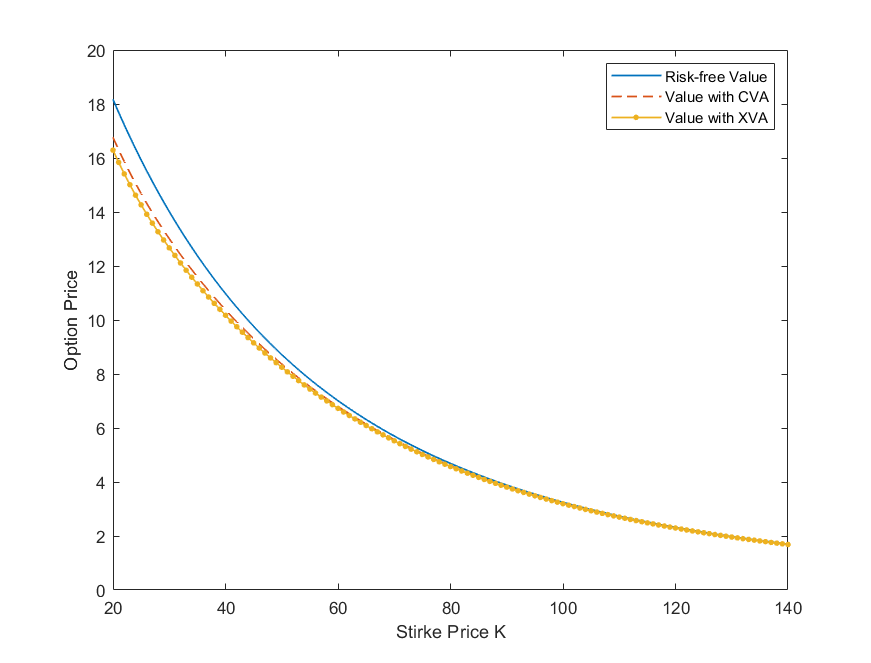}
 \caption{Bermudan call option values with CVA and XVA  against the strike price $K$ with default CGMY parameters.}
\end{figure}
\subsubsection*{Effect of XVA on option value}
In Figure 5, we compare the Bermudan call option value considering CVA and XVA to the risk-free value, which is  the option value without counterparty default risk. As we talked before, the CVA accounts for a large proportion of XVA, and both of them will tend to 0 as $K$ increases. From the Figure 5 we can see that the existence of credit risk and funding risk reduces the value of option. This is intuitive because there is counterparty default risk, which certainly makes the profits obtained through the option smaller for the institutes.

%

\section*{Conclusion}
In this work, we have used two different approaches to find the total value adjustments of the Bermudan option, whose underlying asset follows a complex pure jump CGMY process. Both approaches are based on the Monte Carlo simulation of the CGMY process, and the exposures are calculated at each path and exercise times. Although in this work we only discuss the CGMY model, the approach is similar for other pure jump L\'evy process. For example, when the CGMY parameter $Y=0$, we can obtain the VG model, and the KoBoL model can also be obtained by making appropriate adjustments to the parameters.\\
To obtain the exposure, we applied the finite difference to a FPDE under the CGMY process and proved the convergence of this method (MC-FF method). Compared to the benchmark method (MC-COS method), the MC-FF method we constructed is more efficient when using more Monte Carlo paths. At the same time, the accuracy of the MC-FF method is almost the same as that of the MC-COS method. According to the numerical results, it can be seen that the impact of the pure jump feature on exposure is most significant for $PFE_{97.5\%}$. And CVA, as a part of XVA, has a more obvious effect on the value of the Bermudan option than FVA. \\
Further research should be undertake to explore how to apply realistic market data to find XVA of derivatives and compare it with actual market price. In addition, inspired by Salvador\cite{salvador2020total}, it is also a considerable attempt to derive a differential equation modeling XVA when pure jump feature is assumed.
\section*{Acknowledgements}
This work is supported by University of Macau (MYRG2019-00009-FST and MYRG2018-00025-FST), NSFC (12001067), and Macau Young Scholars Program (AM2020016).

\end{document}